\documentclass[11pt]{article}

\usepackage[
backend=biber,
sorting=nyt,
maxbibnames=99
]{biblatex}

\addbibresource{references.bib}

\usepackage[verbose=true,letterpaper]{geometry}

\usepackage[utf8]{inputenc} 
\usepackage[T1]{fontenc}    
\usepackage{hyperref}       
\usepackage{url}            
\usepackage{booktabs}       
\usepackage{amsfonts}       
\usepackage{nicefrac}       
\usepackage{microtype}      
\usepackage{lipsum}
\usepackage{fancyhdr}       
\usepackage{graphicx}       
\usepackage{amsmath}
\usepackage{amsthm}
\usepackage{xcolor}
\usepackage{comment}
\usepackage{setspace}
\usepackage{cleveref}
\newcommand{\M}{\mathcal{M}}
\newcommand{\C}{\mathcal{C}}
\newcommand{\R}{\mathbb{R}}
\newcommand{\vol}{\mathrm{Vol}}
\newcommand{\Ellipsoid}{\mathcal{E}}
\newcommand{\Rindices}{\R^{\binom{[n]}{2}}}
\newcommand{\nchoosetwo}{\binom{n}{2}}
\newcommand{\unitball}{\textbf{B}_{{[n]\choose 2}}}
\newcommand{\lownerjohn}{L\"{o}wner-John }
\DeclareMathOperator*{\argmax}{argmax}

\newcommand{\columnvecsingleval}[1]{\left(\begin{smallmatrix}
    #1 \\
    \vdots \\
    #1
\end{smallmatrix}\right)}

\newtheorem{theorem}{Theorem}[section]
\newtheorem{lemma}[theorem]{Lemma}
\newtheorem{corollary}[theorem]{Corollary}
\newtheorem{fact}[theorem]{Fact}
\newtheorem*{remark}{Remark}
\theoremstyle{definition}
\newtheorem{definition}{Definition}[section]

\author{Raziel Gartsman\thanks{School of Computer Science and Engineering, Hebrew University, Jerusalem 91904, Israel. e-mail: razielg@cs.huji.ac.il.}
                        \and{Nati Linial\thanks{School of Computer Science and Engineering, Hebrew University, Jerusalem 91904, Israel. e-mail: nati@cs.huji.ac.il. Supported in part by an NSF-BSF grant "Global Geometry of Graphs"}}}
\date{}
\title{On the \lownerjohn Ellipsoids of the Metric Polytope}

\begin{document}
\maketitle

\onehalfspacing

\begin{abstract}
The collection of all $n$-point metric spaces of diameter $\le 1$ constitutes
a polytope $\M_n \subset \R^{\nchoosetwo}$, called the {\em Metric Polytope}. In this paper, we
consider the best approximations of $\M_n$ by ellipsoids. We give an exact explicit 
description of the largest volume ellipsoid contained in $\M_n$. When
inflated by a factor of $\Theta(n)$,
this ellipsoid contains $\M_n$. It also turns out that
the least volume ellipsoid containing $\M_n$ is a ball. 
When shrunk by a factor of $\Theta(n)$,
the resulting ball is contained in $\M_n$. We note that the general theorems on such
ellipsoid posit only that the pertinent
inflation/shrinkage factors can be made as small as $O(n^2)$.
\end{abstract}

\section{Introduction}

For an integer $n \ge 3$ we denote by $\Rindices$ the real $\binom{n}{2}$-dimensional space whose coordinates are
indexed by unordered pairs $ij$, $1 \le i \neq j \le n$. Alternatively, we think of a point in $\Rindices$ as a real symmetric $n\times n$
matrix with zeros along the main diagonal. 
This allows us to view an $n$-vertex metric space as a point $d \in \Rindices$, where $d_{ij}$ is the distance between the two vertices $i$, $j$. The set of all $n$-point metric spaces ${\M\C}_n$ is an $\binom{n}{2}$-dimensional cone, called the {\em Metric Cone}. Namely, 
\begin{equation*}
    {\M\C}_n = \left\{d \in \Rindices \mid \text{ for all distinct $1 \le i, j, k \le n$: } d_{ij} \ge 0 \text{~and~} d_{ij} + d_{jk} - d_{ik} \ge 0 \right\}.
\end{equation*}
We denote by $\M_n = {\M\C}_n \cap {\left[ 0, 1 \right]} ^ {\binom{[n]}{2}}$
the {\em Metric Polytope}, comprised of all $n$-point metric spaces of diameter $\le 1$.
Other versions of this polytope appear as well in the literature (see, e.g. \cite{kozma2021does} or \cite{deza1997geometry, MR2043643}). The study of $\M_n$ has received considerable attention in the literature. A close  relative of the Metric Polytope is the Cut Polytope (see below), also a widely studied geometric object.

It is a basic fact in convex geometry that every convex body $K$ can be well approximated by ellipsoids. Two fundamental results in this area are due to John and to L\"{o}wner.
These theorems say that $K$ can be 'sandwiched' between its maximal volume inscribed ellipsoid and its dilation by a factor of $t$. Likewise for the 
least-volume ellipsoid containing $K$. By convention, $t\Ellipsoid$ denotes the ellipsoid that is obtained from $\Ellipsoid$
as we inflate it by a factor $t>0$ relative to $\Ellipsoid$'s center.

\begin{fact}[\lownerjohn \cite{howard1997john, john1948extremum}] \label{th:lowner_john_dilation}
For every convex body $K \subset \R^m$ there is a unique ellipsoid $\Ellipsoid_{inner}$ of largest volume that is contained in $K$. Moreover,
\[\Ellipsoid_{inner} \subseteq K \subset m\Ellipsoid_{inner}.\]

Also, there is a unique minimal volume ellipsoid $\Ellipsoid_{outer}$, that contains $K$ and
\[\frac{1}{m} \Ellipsoid_{outer} \subset K \subseteq \Ellipsoid_{outer}.\]
\end{fact}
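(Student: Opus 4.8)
My plan is to follow the classical route: first settle existence and uniqueness of the two extremal ellipsoids by compactness and convexity, and then deduce the sandwich inclusions from John's first–order optimality conditions.

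\emph{Existence and uniqueness.} I would parametrize an ellipsoid as $\Ellipsoid=AB+c$, where $B\subset\R^m$ is the Euclidean unit ball, $A$ is symmetric positive definite, and $c\in\R^m$; then $\vol(\Ellipsoid)=\omega_m\det A$ with $\omega_m=\vol(B)$. For the inner ellipsoid, $\Ellipsoid\subseteq K$ confines $c$ to the bounded set $K$ and the eigenvalues of $A$ to $[\rho,\operatorname{diam} K]$ once one restricts attention to ellipsoids of volume at least that of some fixed ball of radius $\rho>0$ contained in $K$; on this compact feasible set $\vol$ is continuous, so a maximizer exists. For uniqueness, if $\Ellipsoid_j=A_jB+c_j$ ($j=0,1$) are both maximal, then the ellipsoid with shape matrix $\tfrac12(A_0+A_1)$ and center $\tfrac12(c_0+c_1)$ lies in $K$ (each of its points is the midpoint of a point of $\Ellipsoid_0$ and a point of $\Ellipsoid_1$), while strict concavity of $\log\det$ on positive-definite matrices gives $\det\tfrac12(A_0+A_1)\ge\sqrt{\det A_0\det A_1}=\det A_0$, with equality only if $A_0=A_1$; maximality forces $A_0=A_1$, and then (applying $A_0^{-1}$) the convex hull of the two unit balls $B+c_0$ and $B+c_1$ is seen by a short computation to contain an ellipsoid of volume $\omega_m\bigl(1+\tfrac12\|c_0-c_1\|\bigr)$, so $c_0=c_1$. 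For $\Ellipsoid_{outer}$ I would run the mirror-image argument, averaging the quadratic forms defining $\Ellipsoid_0,\Ellipsoid_1$ rather than their shape matrices.

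\emph{John's conditions --- the crux.} After an invertible affine change of coordinates assume $\Ellipsoid_{inner}=B$. The step I expect to be the main obstacle is proving that maximality of $B$ produces \emph{contact points} $u_1,\dots,u_N\in\partial B\cap\partial K$ and weights $\lambda_1,\dots,\lambda_N>0$ with
\[
\sum_i\lambda_i u_iu_i^{\top}=I_m,\qquad \sum_i\lambda_i u_i=0
\]
(hence $\sum_i\lambda_i=m$, on taking traces). I would derive this by a separating-hyperplane / Farkas argument: an infinitesimal affine perturbation $x\mapsto x+\varepsilon(Mx+t)$ of $B$ changes the volume to first order by $\varepsilon\operatorname{tr}M$ and keeps the image inside $K$ only if $\langle M,u u^{\top}\rangle+\langle t,u\rangle\le 0$ at every contact point $u$; maximality therefore says there is no $(M,t)$ with $\operatorname{tr}M>0$ satisfying all those inequalities, which by duality puts the pair $(I_m,0)$ in the convex cone generated by the $(u u^{\top},u)$ over contact points $u$ --- exactly the displayed decomposition. (The minimal enclosing ellipsoid admits a completely parallel first-order analysis, yielding contact points and weights with the same two identities.)

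\emph{The sandwich inclusions.} Granting John's conditions for $\Ellipsoid_{inner}=B$, the inclusion $K\subseteq mB$ follows from the identity, valid for all $x$,
\[
\|x\|^2=\sum_i\lambda_i\langle u_i,x\rangle^2=\sum_i\lambda_i\bigl(1-\langle u_i,x\rangle\bigr)^2-m,
\]
where the first equality is $x^{\top}I_m x=x^{\top}\!\bigl(\sum_i\lambda_i u_iu_i^{\top}\bigr)x$ and the second comes from expanding the square and using $\sum_i\lambda_i=m$, $\sum_i\lambda_i\langle u_i,x\rangle=0$. For $x\in K$ each hyperplane $\{y:\langle y,u_i\rangle=1\}$ supports $K$ at $u_i$ (it already supports $B\subseteq K$ there), so $0\le 1-\langle u_i,x\rangle\le 1+\|x\|$; hence $\sum_i\lambda_i(1-\langle u_i,x\rangle)^2\le(1+\|x\|)\sum_i\lambda_i(1-\langle u_i,x\rangle)=(1+\|x\|)m$, and the identity yields $\|x\|^2\le m\|x\|$, i.e., $\|x\|\le m$. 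For the outer statement, assume $\Ellipsoid_{outer}=B$ and take contact points $u_i\in\partial B\cap K$ with $\sum_i\lambda_i u_iu_i^{\top}=I_m$, $\sum_i\lambda_i u_i=0$; then $\tfrac1m B\subseteq\operatorname{conv}\{u_i\}\subseteq K$, since otherwise a unit functional $\langle\cdot,w\rangle$ separating a point of $\tfrac1m B$ from $\operatorname{conv}\{u_i\}$ would obey $\langle u_i,w\rangle\le\alpha<\tfrac1m$ for every $i$, and the analogous completion of the square applied to $1=\|w\|^2=\sum_i\lambda_i\langle u_i,w\rangle^2$ would force $\alpha\ge\tfrac1m$. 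The one genuinely substantial ingredient is John's conditions; everything else is compactness, strict concavity of $\det$ on positive-definite matrices, and the elementary manipulations above.
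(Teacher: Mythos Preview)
The paper does not prove this statement at all: it is recorded as a cited \textbf{Fact} (with references to John's original paper and a later exposition) and used as a black box throughout. So there is no ``paper's own proof'' to compare against.

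Your outline is the standard classical argument and is essentially correct: compactness for existence, strict concavity of $\log\det$ for uniqueness of the shape matrix, a Farkas/separation argument for John's first-order conditions, and the completing-the-square identity for the dilation bounds. Two minor quibbles, neither fatal. First, in the existence step your claim that the eigenvalues of $A$ lie in $[\rho,\operatorname{diam}K]$ once the volume is bounded below is not literally true (one eigenvalue can be much smaller than $\rho$ if others compensate); what you actually get is $\det A\ge\rho^m$ together with each eigenvalue at most $\operatorname{diam}K$, which still gives compactness. Second, your sketch for $\tfrac1m B\subseteq\operatorname{conv}\{u_i\}$ is right in spirit; the clean way to finish is: if $\langle u_i,w\rangle\le\alpha$ for all $i$ and some unit $w$, then since each $\langle u_i,w\rangle\in[-1,\alpha]$ one has $\langle u_i,w\rangle^2\le(\alpha-1)\langle u_i,w\rangle+\alpha$, and summing against the $\lambda_i$ using both John identities gives $1\le\alpha m$.
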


These inner and outer ellipsoids are sometimes referred to as \lownerjohn ellipsoids \cite{henk2012lowner}. Our main results are explicit expressions for the \lownerjohn ellipsoids of $\M_n$, given in  theorems \ref{th:outer_main} and \ref{th:inner_main}:

\begin{theorem}[Outer \lownerjohn Ellipsoid of $\M_n$] \label{th:outer_main}
The least volume ellipsoid $\Ellipsoid_n$ that contains $\M_n \subseteq \Rindices$ is the $\nchoosetwo$-dimensional ball with radius $\frac{1}{2} \sqrt{\nchoosetwo}$ centered at $\columnvecsingleval{\nicefrac{1}{2}}$.

By shrinking $\Ellipsoid_n$ by a factor of $\sqrt{3\nchoosetwo}$ we obtain a ball that is contained in $\M_n$. This factor is tight.
\end{theorem}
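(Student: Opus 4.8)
The proof naturally splits into the two assertions. For the circumscribing ellipsoid I plan to use the standard characterization of the minimal-volume \lownerjohn ellipsoid (the "outer'' counterpart of John's theorem, contained in the references of Fact~\ref{th:lowner_john_dilation}): after an affine map sending the candidate ellipsoid to $\unitball$, a convex body $K\subseteq\unitball$ has $\unitball$ as its least-volume circumscribing ellipsoid if and only if there are contact points $u_1,\dots,u_N\in K\cap\partial\unitball$ and weights $c_i>0$ with $\sum_i c_iu_i=0$ and $\sum_i c_iu_iu_i^{\top}=I$. So it suffices to exhibit enough contact points of $\M_n$ on the sphere of radius $\tfrac12\sqrt{\nchoosetwo}$ about $\columnvecsingleval{\nicefrac{1}{2}}$ whose weighted second moment is a multiple of the identity.

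First I would note that $\M_n\subseteq[0,1]^{\binom{[n]}{2}}$ and that the cube is contained in the ball $\Ellipsoid_n$ of the statement, so $\Ellipsoid_n\supseteq\M_n$. The candidate contact points are the cut (semi)metrics: for $S\subseteq[n]$ let $\delta_S\in\Rindices$ have $(\delta_S)_{ij}=1$ if $|S\cap\{i,j\}|=1$ and $0$ otherwise. Each $\delta_S$ is a metric of diameter $\le1$, hence lies in $\M_n$, and since every coordinate of $\delta_S-\columnvecsingleval{\nicefrac{1}{2}}$ is $\pm\tfrac12$, each $\delta_S$ lies on $\partial\Ellipsoid_n$. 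Writing $u_S=\delta_S-\columnvecsingleval{\nicefrac{1}{2}}$ and introducing sign variables $\sigma_a=\pm1$ indicating membership of $a$ in $S$, one has $(u_S)_{ij}=-\tfrac12\sigma_i\sigma_j$; summing over all $2^n$ subsets $S$ and using orthogonality of the characters $\sigma\mapsto\sigma_i\sigma_j$ yields $\sum_S u_S=0$ and $\sum_S u_Su_S^{\top}=2^{n-2}I$. Thus uniform weights on the cut metrics decompose the identity (which in particular forces these contact points to positively span $\Rindices$), so the characterization applies and $\Ellipsoid_n$ is indeed the outer \lownerjohn ellipsoid, with the stated center and radius. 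I expect this moment computation over cuts — equivalently, verifying that the $\pm1$ "cut matrix'' has the right Gram structure — to be the one genuinely nonroutine step; everything else is bookkeeping.

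For the second assertion, shrinking $\Ellipsoid_n$ by $\sqrt{3\nchoosetwo}$ produces the ball of radius $\tfrac12\sqrt{\nchoosetwo}\big/\sqrt{3\nchoosetwo}=\tfrac1{2\sqrt3}$ centered at $p:=\columnvecsingleval{\nicefrac{1}{2}}$, so I must show that the largest ball about $p$ contained in $\M_n$ has radius exactly $\tfrac1{2\sqrt3}$. Since $\M_n$ is the polytope cut out by the halfspaces $d_{ij}\ge0$, $d_{ij}\le1$ and $d_{ij}+d_{jk}-d_{ik}\ge0$, and $p$ lies in its interior (each inequality is strict at $p$), the in-radius about $p$ equals the minimum, over these halfspaces $\{a^{\top}d\le b\}$, of $(b-a^{\top}p)/\|a\|_2$; adding redundant inequalities does not affect this minimum. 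For the box constraints this distance is $\tfrac12$, while for a triangle constraint the slack at $p$ is $\tfrac12+\tfrac12-\tfrac12=\tfrac12$ and the normal (entries $+1,+1,-1$) has norm $\sqrt3$, giving distance $\tfrac1{2\sqrt3}<\tfrac12$. Hence the in-radius is $\tfrac1{2\sqrt3}$, which simultaneously shows that the shrunk ball lies in $\M_n$ and that $\sqrt{3\nchoosetwo}$ cannot be replaced by a smaller factor (a triangle facet is touched). The only point worth stating carefully here is the elementary fact that the in-radius of a polytope about a fixed interior point is this minimum of signed distances to the defining hyperplanes.
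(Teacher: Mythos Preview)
Your proposal is correct and follows essentially the same route as the paper: use cut metrics as contact points on the circumscribed sphere and verify John's decomposition-of-identity criterion, then compute the distance from the center to each family of facets to get the inradius $\tfrac{1}{2\sqrt3}$. The only cosmetic difference is that you phrase the moment computation via the characters $\sigma\mapsto\sigma_i\sigma_j$ on $\{\pm1\}^n$ (summing over all $2^n$ subsets), whereas the paper phrases the same calculation probabilistically as the independence of $[\delta'(S)]_{kl}$ and $[\delta'(S)]_{pq}$ for a uniformly random $S$; these are the same argument, and your inclusion of the trivial cut $S=\emptyset$ is harmless since the zero metric is also a contact point.
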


\begin{theorem}[Inner \lownerjohn Ellipsoid of $\M_n$] \label{th:inner_main}
The largest volume ellipsoid $\Ellipsoid_n$ that is contained in $\M_n$ is defined as follows. 
Consider the following $\nchoosetwo \times \nchoosetwo$ positive semidefinite matrix $A_n$ and vector $c_n \in \Rindices$
\[\Ellipsoid_n = A_n \unitball + c_n, \]
where $\unitball$ is the $\nchoosetwo$-dimensional unit ball.

Namely,

\begin{align*}
    & c_n = \columnvecsingleval{\delta} \\
    & \text{for all } ij, kl \in \binom{[n]}{2}: {\left[A_n \right]}_{ij, kl} =
        \left\{
        \begin{matrix}
            \alpha & ij = kl \\
            \beta & \left|\{i, j\} \cap \{k, l\}\right| = 1 \\
            \gamma & \{i, j\} \cap \{k, l\} = \emptyset
        \end{matrix}
        \right.
\end{align*}
Here
\begin{alignat*}{2}
    \alpha &= \frac{\sqrt{3} - 1}{2} + o(1)                                                                              && \approx 0.3660 + o(1) \\
    \beta  &= \frac{\sqrt{3} - 1}{2} \cdot (\sqrt{3 - \sqrt{3}} - 1) \cdot \frac{1}{n} + o\left( \frac{1}{n}\right)      && \approx 0.0461 \cdot \frac{1}{n} + o\left( \frac{1}{n}\right) \\
    \gamma &= \frac{\sqrt{3} - 1}{2} \cdot (4 - 2\sqrt{3 - \sqrt{3}}) \cdot \frac{1}{n^2} + o\left( \frac{1}{n^2}\right) && \approx 0.6398 \cdot \frac{1}{n^2} + o\left( \frac{1}{n^2}\right)\\
    \delta &= \frac{\sqrt{3} - 1}{2} \cdot \sqrt{3} + o(1)                                                               && \approx 0.6340 + o(1).
\end{alignat*}

The smallest $r$ for which $\Ellipsoid_{n} \subseteq \M_n \subseteq r\Ellipsoid_{n}$, is between
$\frac{3^{\frac{3}{4}} \sqrt{5 + \sqrt{3}}}{6}n + o(n) \approx 0.99n + o(n)$
and $\sqrt{3\binom{n}{2}} +o(n) \approx 1.22n + o(n)$.
\end{theorem}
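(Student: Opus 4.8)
The plan rests on the $S_n$-symmetry of $\M_n$ together with the uniqueness clause of Fact~\ref{th:lowner_john_dilation}. The symmetric group $S_n$ acts on $\Rindices$ by permuting the $n$ points, hence the pairs $ij$; this action is by permutation matrices, so it is orthogonal and it carries $\M_n$ to itself. Since the maximal-volume inscribed ellipsoid is unique, it must be $S_n$-invariant. Writing it as $A_n\unitball+c_n$ with $A_n$ symmetric positive semidefinite (the square root of its shape matrix), invariance forces $c_n$ to be a fixed vector, i.e.\ $c_n=\columnvecsingleval{\delta}$, and forces $A_n$ to commute with the $S_n$-action. The permutation module $\Rindices$ decomposes into three pairwise non-isomorphic irreducibles, namely the eigenspaces $V_0,V_1,V_2$ of the Johnson scheme $J(n,2)$, of dimensions $1$, $n-1$, $\binom n2-n$; hence the commutant is three-dimensional, spanned by $I$, the ``share one point'' matrix $A_1$, and the ``disjoint'' matrix $A_2$, so $A_n=\alpha I+\beta A_1+\gamma A_2$. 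This is precisely the claimed form, and the whole problem is reduced to the four scalars $\alpha,\beta,\gamma,\delta$.

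First I would make the containment $\Ellipsoid_n\subseteq\M_n$ explicit. A half-space $\{x:\langle a,x\rangle\le b\}$ contains $\Ellipsoid_n$ iff $\langle a,c_n\rangle+\|A_na\|\le b$. The facets of $\M_n$ form two $S_n$-orbits, the upper box facets $d_{12}\le 1$ and the triangle facets $d_{12}+d_{23}-d_{13}\ge 0$ (the facets $d_{ij}\ge0$ are implied by the triangle inequalities), so containment reduces to the two inequalities $\delta+\|A_ne_{12}\|\le 1$ and $\|A_n(e_{12}+e_{23}-e_{13})\|\le\delta$, plus $A_n\succeq0$. Using the intersection numbers of $J(n,2)$ one evaluates $\|A_ne_{12}\|^2=[A_n^2]_{12,12}$ and $\|A_n(e_{12}+e_{23}-e_{13})\|^2=3[A_n^2]_{12,12}-2[A_n^2]_{12,23}$ as explicit polynomials in $\alpha,\beta,\gamma,n$, while $A_n\succeq0$ reads $\lambda_0,\lambda_1,\lambda_2\ge0$, where the eigenvalue $\lambda_i$ of $A_n$ on $V_i$ is a known linear form in $\alpha,\beta,\gamma$. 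Finally $\vol(\Ellipsoid_n)=\vol(\unitball)\cdot\det A_n=\vol(\unitball)\,\lambda_0\,\lambda_1^{\,n-1}\,\lambda_2^{\,\binom n2-n}$.

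The heart of the argument is to maximize $\log\det A_n$ subject to these constraints; the maximizer is unique, both by Fact~\ref{th:lowner_john_dilation} and because $\log\det$ is strictly concave on the feasible set. I expect that at the optimum \emph{both} the box constraint (in the form $\delta+\|A_ne_{12}\|=1$, the binding box facet since one checks $\delta>\tfrac12$) and the triangle constraint $\|A_n(e_{12}+e_{23}-e_{13})\|=\delta$ are tight; this can be read off from the contact-point characterization of the John ellipsoid together with $S_n$-symmetry — for instance the radius-$\tfrac12$ ball about $\columnvecsingleval{\nicefrac12}$, which is the John ellipsoid of the surrounding cube, violates the triangle facets, so those must bind, and then one checks the box facets bind as well. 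Strikingly, these two equalities already pin down the leading order: eliminating $\|A_ne_{12}\|$ between them gives $3-6\delta+2\delta^2\to0$, forcing $\delta\to\tfrac{3-\sqrt3}2$ and hence $\alpha\to\tfrac{\sqrt3-1}2$. The two remaining degrees of freedom are fixed by the Lagrange/KKT stationarity conditions for $\log\det A_n$. Because $\alpha,\beta,\gamma$ sit at the separate scales $1,\ n^{-1},\ n^{-2}$ while $\log\det A_n$ is a sum of $\binom n2$ logarithms, extracting $\beta$ and then $\gamma$ requires a careful multi-scale expansion: the stationarity condition in the $\beta$-direction, read at scale $n^{-1}$ against the two constraints expanded to order $n^{-1}$, yields one equation whose relevant root is $\beta=\tfrac{\sqrt3-1}2(\sqrt{3-\sqrt3}-1)\tfrac1n+o(\tfrac1n)$, and pushing the expansion one order further together with stationarity in the $\gamma$-direction determines $\gamma$; the attendant next-order corrections to $\alpha$ and $\delta$ are what is absorbed into the $o(1)$ terms. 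Carrying out this asymptotic bookkeeping cleanly, and checking the computed point is the global maximum, is the main technical obstacle.

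For the dilation factor, note $r\Ellipsoid_n=(rA_n)\unitball+c_n$, so (for $r\ge1$) the condition $\Ellipsoid_n\subseteq\M_n\subseteq r\Ellipsoid_n$ holds iff $r^2\ge\max_{d}\,(d-c_n)^{\!\top}A_n^{-2}(d-c_n)$ over the vertices $d$ of $\M_n$ — the maximum of a convex quadratic over a polytope is attained at a vertex — and $A_n^{-2}$ has eigenvalue $\lambda_i^{-2}$ on $V_i$, with $\lambda_2$ the smallest of the three. For the upper bound, $(d-c_n)^{\!\top}A_n^{-2}(d-c_n)\le\lambda_2^{-2}\|d-c_n\|^2\le\lambda_2^{-2}\binom n2\delta^2$ since $d\in[0,1]^{\binom{[n]}2}$ and $\delta>\tfrac12$; as $\delta/\lambda_2\to\sqrt3$ this yields $r\le\sqrt{3\binom n2}+o(n)$. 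For the matching lower bound I would evaluate the quadratic form at suitable vertices of $\M_n$, in particular the cut semimetrics $\delta_S(ij)=\mathbf1[\,|S\cap\{i,j\}|=1\,]$ with $|S|=\kappa n$: the deviation $\delta_S-c_n$ decomposes along $V_0\oplus V_1\oplus V_2$ with all three components computable in closed form in $\kappa$ (the $V_2$-part, on which $\lambda_2^{-2}$ is largest, carries the bulk), so $(\delta_S-c_n)^{\!\top}A_n^{-2}(\delta_S-c_n)$ becomes an explicit function of $\kappa$; optimizing over $\kappa\in(0,1)$ gives the stated lower bound $\tfrac{3^{3/4}\sqrt{5+\sqrt3}}6\,n+o(n)$.
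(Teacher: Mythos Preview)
Your proposal follows essentially the same route as the paper: $S_n$-invariance plus uniqueness forces the $(\alpha,\beta,\gamma,\delta)$ form, the eigenvalue decomposition of the Johnson scheme reduces $\log\det A_n$ and the PSD constraint to the three scalars $\lambda_i$, the two facet orbits yield exactly the two inequalities you write, and the asymptotics are extracted by Lagrange/KKT analysis. Your dilation bounds are also the paper's: the upper bound via $\lambda_{\min}^{-1}\max_{d\in[0,1]^{\binom n2}}\|d-c_n\|$ and the lower bound by evaluating the quadratic form at a balanced cut (the paper simply takes $|S|=\lfloor n/2\rfloor$ rather than optimizing over $\kappa$, but arrives at the same constant).

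One genuine, if minor, difference worth flagging: your shortcut for the leading terms of $\delta$ and $\alpha$---observing that $[A_n^2]_{12,23}=O(1/n)$, so the two tight constraints alone force $3-6\delta+2\delta^2\to0$ and then $\alpha\to1-\delta$---is cleaner than what the paper does; there the full Lagrange system is written out first and the same leading-order values emerge only after taking linear combinations and passing to asymptotics. Your way isolates the mechanism nicely. The caveat is that you \emph{assume} both facet orbits are tight and justify this only heuristically (``the cube's John ball violates the triangle facets, so those must bind, and then one checks the box facets bind as well''); the paper instead derives $t_1=t_2=0$ from the multiplier equations, which is the rigorous route. Your heuristic is correct in spirit but would need to be tightened---e.g.\ by the standard argument that if one constraint family were slack you could dilate the ellipsoid in a direction improving the volume---before the shortcut stands on its own.
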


\begin{remark}
Note that the shrinkage and inflation parameters in theorems \ref{th:outer_main}, \ref{th:inner_main} are only linear in $n$, whereas
the general Theorem \ref{th:lowner_john_dilation} only yields quadratic factors.
\end{remark}

This paper is organised as follows:
We prove Theorem \ref{th:outer_main} in section \ref{sec:outer}. We then proceed to proving Theorem \ref{th:inner_main} in subsection \ref{subsec:inner_proof}, and discuss some consequences of this result in subsection \ref{subsec:inner_conclusions}.

\subsection{Some preliminaries} \label{sec:preliminaries}

Here we collect several standard facts for reference:
an $m$-dimensional ellipsoid $\Ellipsoid \subset \R^{m}$ is the affine image of the unit ball $A \textbf{B}_{m} + c$, where $A$ is a positive semidefinite matrix and $c$ is a vector. Its geometric properties are given by $A$ and $c$, in the following sense:
\begin{itemize}
    \item $c$ is the ellipsoid's {\em center}.
    \item the ellipsoid's axes are aligned on $A$'s eigenvectors, and have lengths proportional to the corresponding eigenvalues.
    \item $\vol(\Ellipsoid) = (\det A) \cdot \vol(\textbf{B}_{m})$.
\end{itemize}

\section{The Outer \lownerjohn Ellipsoid of $\M_n$} \label{sec:outer}

The following theorem of John characterizes the least-volume ellipsoid $\Ellipsoid$ containing a given convex body.
In view of the comment in Section \ref{sec:preliminaries}, it suffices to consider the special case $\Ellipsoid =B_n$.

\begin{theorem}[John \cite{john1948extremum}] \label{th:john_outer_contact}
Let $K \subset \R^N$ be a convex body where $K \subseteq B_N$. Then the following statements are equivalent:
\begin{itemize}
    \item $B_N$ is the unique least volume ellipsoid containing $K$.
    \item There exist contact points $u_1, ..., u_m \in \partial K \cap \partial B_N$, and positive numbers $\lambda_1, ..., \lambda_m$, with $m \ge N$, such that
    \( \sum_{i=1} ^{m} {\lambda_i u_i} = 0 \) and \(I_n = \sum_{i=1} ^{m} {\lambda_i \left(u_i u_{i}^{t}\right)}\),
    where $I_n$ is the identity matrix.
\end{itemize}
\end{theorem}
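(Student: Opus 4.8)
The plan is to prove the two implications separately, since they are of rather different flavour.

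\emph{From the contact points to minimality and uniqueness.} Given $u_1,\dots,u_m\in\partial K\cap\partial B_N$ and $\lambda_i>0$ with $\sum_i\lambda_iu_i=0$ and $\sum_i\lambda_iu_iu_i^t=I_N$, take the trace of the second identity: since $\|u_i\|=1$ this gives $\sum_i\lambda_i=N$. Let $\Ellipsoid=AB_N+c$ be \emph{any} ellipsoid with $K\subseteq\Ellipsoid$ and $\vol(\Ellipsoid)\le\vol(B_N)$; then $A\succ0$ (a singular $A$ cannot contain a body) and $\det A\le1$. From $u_i\in\partial K\subseteq\Ellipsoid$ we get $(u_i-c)^tA^{-2}(u_i-c)\le1$; multiplying by $\lambda_i$, summing, and plugging in the three identities collapses the left side to $\operatorname{tr}(A^{-2})+N\,c^tA^{-2}c$, whence $\operatorname{tr}(A^{-2})\le N$. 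But AM--GM gives $\operatorname{tr}(A^{-2})\ge N(\det A)^{-2/N}\ge N$, so equality holds throughout: the AM--GM equality case forces $A^{-2}$, hence $A$, to be scalar, and then $\det A\le1$ together with equality forces $A=I_N$, while $N\,c^tA^{-2}c=0$ forces $c=0$. Thus $\Ellipsoid=B_N$, which proves simultaneously that $B_N$ has least volume among ellipsoids containing $K$ and that it is the only such ellipsoid.

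\emph{From minimality to the contact points.} Let $U=\partial K\cap\partial B_N$. If $U=\emptyset$ then $\max_{x\in K}\|x\|<1$ and a slightly smaller concentric ball contains $K$, contradicting minimality; so $U\neq\emptyset$, and it is compact. A trace-and-rescale computation shows that the sought decomposition ($\lambda_i>0$, $u_i\in U$, $\sum\lambda_iu_i=0$, $\sum\lambda_iu_iu_i^t=I_N$) exists if and only if $0$ lies in the convex hull of the compact set $T=\{(uu^t-\tfrac1N I_N,\,u):u\in U\}$, regarded as a subset of the finite-dimensional space of pairs consisting of a symmetric $N\times N$ matrix and a vector in $\R^N$. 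Suppose not. Then $\operatorname{conv}(T)$ is a compact convex set missing $0$, so strict separation provides a symmetric $H$, a vector $h$, and $\varepsilon>0$ with $\langle H,\,uu^t-\tfrac1N I_N\rangle+h^tu\ge\varepsilon$ on $U$; since the identity component of $H$ contributes $\|u\|^2-1=0$ on $U$, we may take $H=:G$ traceless, and, writing $g:=h$, we have $u^tGu+g^tu\ge\varepsilon$ on $U$. By continuity this holds with $\varepsilon/2$ on an open neighbourhood $V$ of $U$, while $\partial K\setminus V$ is compact and disjoint from $\partial B_N$, so $\|x\|\le1-\eta$ there for some $\eta>0$.

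\emph{The perturbation.} For small $\delta>0$ let
\[\Ellipsoid_\delta=\Bigl\{x\in\R^N:\bigl(x-\tfrac{\delta}{2}g\bigr)^t(I_N-\delta G)\bigl(x-\tfrac{\delta}{2}g\bigr)\le 1-\tfrac{\varepsilon}{4}\delta\Bigr\},\]
an honest ellipsoid for $\delta$ small ($I_N-\delta G\succ0$ and the right side positive). Expanding, uniformly over the compact $\partial K$,
\[\bigl(x-\tfrac{\delta}{2}g\bigr)^t(I_N-\delta G)\bigl(x-\tfrac{\delta}{2}g\bigr)=\|x\|^2-\delta\bigl(x^tGx+g^tx\bigr)+O(\delta^2);\]
on $V\cap\partial K$ this is $\le1-\tfrac{\varepsilon}{2}\delta+O(\delta^2)\le1-\tfrac{\varepsilon}{4}\delta$ and on $\partial K\setminus V$ it is $\le(1-\eta)^2+O(\delta)\le1-\tfrac{\varepsilon}{4}\delta$, both for $\delta$ small, so $\partial K\subseteq\Ellipsoid_\delta$; since $\Ellipsoid_\delta$ is closed and convex and each point of $\operatorname{int}K$ lies on a chord with endpoints in $\partial K$, also $K\subseteq\Ellipsoid_\delta$. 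But $\Ellipsoid_\delta=\sqrt{1-\tfrac{\varepsilon}{4}\delta}\,(I_N-\delta G)^{-1/2}B_N+\tfrac{\delta}{2}g$, so $\vol(\Ellipsoid_\delta)=(1-\tfrac{\varepsilon}{4}\delta)^{N/2}(\det(I_N-\delta G))^{-1/2}\vol(B_N)$, and since $G$ is traceless $\det(I_N-\delta G)=1-\tfrac{\delta^2}{2}\|G\|_F^2+O(\delta^3)$, so the middle factor is $1+O(\delta^2)$ and $\vol(\Ellipsoid_\delta)<\vol(B_N)$ for $\delta$ small, contradicting minimality. Hence $0\in\operatorname{conv}(T)$; Carathéodory's theorem turns this into finitely many $u_i\in U$ with positive weights, and since $\sum\lambda_iu_iu_i^t=I_N$ has rank $N$ while a sum of $m$ rank-one matrices has rank $\le m$, necessarily $m\ge N$. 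Together the two directions give the stated equivalence. The crux of the whole argument is this perturbation step: one must perturb the quadratic form along $-G$, shift the centre along $g$, \emph{and} shrink by the factor $1-\tfrac{\varepsilon}{4}\delta$ all at once, and then verify that containment of $K$ survives uniformly over the compact boundary $\partial K$ (the part near $U$ handled by the separating inequality, the part bounded away from $\partial B_N$ by a crude estimate) while the volume strictly decreases to first order in $\delta$; everything else — the trace identity, the AM--GM step, the reduction to $0\in\operatorname{conv}(T)$, and the separation — is routine.
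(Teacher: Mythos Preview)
The paper does not prove this theorem at all: it is stated with a citation to John's 1948 paper and then used as a black box in the proof of Theorem~\ref{th:outer_main}. So there is no ``paper's own proof'' to compare against.

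That said, your argument is a correct and self-contained proof of John's theorem, following the standard route. In the first direction your AM--GM computation is clean: summing $\lambda_i(u_i-c)^tA^{-2}(u_i-c)\le\lambda_i$ and using the three identities gives $\operatorname{tr}(A^{-2})+N\,c^tA^{-2}c\le N$, which together with $\operatorname{tr}(A^{-2})\ge N(\det A)^{-2/N}\ge N$ forces equality, hence $A=I_N$ and $c=0$. In the second direction your separation-plus-perturbation argument is the usual one; the key points --- reducing the separating matrix $H$ to a traceless $G$ (since the $I_N$-component pairs to $\|u\|^2-1=0$ on $U$), the first-order expansion of the perturbed quadratic form, the two-region verification of containment over $\partial K$, and the volume estimate via $\det(I_N-\delta G)=1+O(\delta^2)$ from $\operatorname{tr}G=0$ --- are all handled correctly. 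The final Carath\'eodory step and the rank bound $m\ge N$ are fine. In short, you have supplied what the paper merely cites.
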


We prove Theorem \ref{th:outer_main} by finding the appropriate contact points and the coefficients $\lambda_i$. As it turns out, and as we show now these contact points coincide
with {\em cut metrics} over $[n]$.

\begin{definition}[Cut Metric]
An $n$-point metric $d\in \Rindices$ is called a Cut Metric if there is a set $S\subseteq [n]$ for which $\forall 1 \leq i \neq j\le n: d_{ij} = \left\{\begin{smallmatrix}
1 & |\{i,j\} \cap S|=1 \\
0 & else
\end{smallmatrix}
\right. .$
The cut metric corresponding to a set $S \subseteq [n]$ is denoted by $\delta(S) \in \Rindices$.
\end{definition}

The convex hull of all cut metrics over $[n]$ is called the Cut Polytope $\mathcal{C}_n \subset \Rindices$. We next recall its close relationship with $\M_n$.

\begin{theorem} \label{th:cut_is_vertex}
Every cut metric in $\Rindices$ is a vertex of $\M_n$.
\end{theorem}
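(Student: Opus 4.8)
The plan is to first verify that each cut metric is actually a point of $\M_n$, and then to conclude that it is a vertex from the fact that $\M_n$ sits inside the unit cube. For membership, given $S\subseteq[n]$ I would set $x_i=1$ if $i\in S$ and $x_i=0$ otherwise, so that $\delta(S)_{ij}=|x_i-x_j|$ for all $i\neq j$. Then the nonnegativity constraints $d_{ij}\ge 0$ and the triangle inequalities $d_{ij}+d_{jk}-d_{ik}\ge 0$ hold because they hold for distances on the real line, whence $\delta(S)\in{\M\C}_n$; moreover every coordinate of $\delta(S)$ is $0$ or $1$ and hence lies in $[0,1]$. Therefore $\delta(S)\in{\M\C}_n\cap[0,1]^{\binom{[n]}{2}}=\M_n$.

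Second, I would invoke the elementary observation that extreme points are inherited downward: if $K\subseteq L$ are convex sets and a point $v\in K$ is an extreme point of $L$, then $v$ is an extreme point of $K$, since any nontrivial convex combination $v=\lambda a+(1-\lambda)b$ with $a,b\in K$ is in particular such a combination inside $L$, forcing $a=b=v$. Since $\M_n\subseteq[0,1]^{\binom{[n]}{2}}$ by definition, since the vertices of the cube $[0,1]^{\binom{[n]}{2}}$ are precisely its $0/1$ points, and since $\delta(S)$ is a $0/1$ point lying in $\M_n$, it follows at once that $\delta(S)$ is a vertex of $\M_n$.

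For completeness — and because later we will want these vertices as John contact points together with an explicit supporting functional — I would also exhibit the supporting hyperplane directly. Let $\varepsilon_{ij}=+1$ when $|\{i,j\}\cap S|=1$ and $\varepsilon_{ij}=-1$ otherwise, and consider the linear functional $f(d)=\sum_{ij}\varepsilon_{ij}d_{ij}$. On each coordinate, $\varepsilon_{ij}d_{ij}$ is uniquely maximized over $d_{ij}\in[0,1]$ at $d_{ij}=\tfrac{1+\varepsilon_{ij}}{2}=\delta(S)_{ij}$; hence $f$ attains its maximum $|S|(n-|S|)$ over the cube uniquely at $\delta(S)$, and therefore also uniquely over the smaller set $\M_n$, which still contains $\delta(S)$. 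This realizes $\delta(S)$ as a vertex of $\M_n$.

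I do not expect a real obstacle here. The only point requiring a line of verification is the membership $\delta(S)\in\M_n$, and that becomes immediate once one writes $\delta(S)_{ij}=|x_i-x_j|$. Should a more combinatorial proof be preferred, one could instead count the equalities active at $\delta(S)$ — namely $d_{ij}=1$ on the pairs crossing $S$ and $d_{ij}=0$ on the remaining pairs — and check that they span $\Rindices$, but the containment-in-the-cube argument is shorter and entirely self-contained.
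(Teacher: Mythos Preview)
Your proof is correct and follows essentially the same idea as the paper: the paper simply observes that a $0/1$ point of $\M_n\subseteq[0,1]^{\binom{[n]}{2}}$ cannot be a nontrivial convex combination of two distinct points of $\M_n$, which is exactly your ``extreme points are inherited downward from the cube'' argument. Your write-up is more careful (you verify membership $\delta(S)\in\M_n$ and supply an explicit supporting functional), but the underlying reasoning coincides.
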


\begin{proof}
A point in $\M_n$ is a vertex if and only if it is not the convex combinations of any two distinct points in $\M_n$. Since all cut metrics have only $0$- or $1$-coordinates, they clearly cannot be such combinations.
\end{proof}

We are now ready to prove Theorem \ref{th:outer_main}.

\begin{proof}[Proof of Theorem \ref{th:outer_main}]
There are $m = 2^{n - 1}-1 \ge \nchoosetwo$ cut metrics over $[n]$.
As Theorem \ref{th:cut_is_vertex} shows, every cut metric $\delta(S)$ is a vertex of $\M_n$. 
For a set $S \subseteq [n]$, denote \[\delta^{'}(S) = 2\delta(S) - 1 = \left\{\begin{smallmatrix}
1 & |\{i,j\} \cap S|=1 \\
-1 & else
\end{smallmatrix}
\right.\]
and let $\M_n^{'} = \frac{1}{\frac{1}{2} \sqrt{\nchoosetwo}} \M_n - \frac{1}{2}\columnvecsingleval{1}$.
Note that $\frac{1}{\sqrt{\nchoosetwo}} \delta^{'}(S)$ is a unit vector and a vertex of $\M_n^{'}$.
Denote those vertices as $u_1, ..., u_m$, and set $\forall i:\: \lambda_i = \lambda = \frac{\nchoosetwo}{m}$. For any $1 \le k \neq l \le n$, the following holds:
\[
\left[\sum^m_{i=1}{\lambda_{i}(u_i u_i^T)}\right]_{kl, kl} = \lambda \sum_{\delta(S), S\subseteq [n]} {\left(\frac{1}{\sqrt{\nchoosetwo}}\left[\delta^{'}(S)\right]_{kl} \right)^2} = \lambda \frac{m}{\nchoosetwo} = 1 .
\]
Let us sample a subset $S\subseteq [n]$ and consider the corresponding cut metric $\delta(S)$.
Note that for $\{k,l\} \neq \{p,q\}$, the random variables $\left[\delta^{'}(S)\right]_{kl}$, $\left[\delta^{'}(S)\right]_{pq}$ are independent. Therefore:
\[
    \left[\sum^{m}_{i=1}{\lambda_{i} \left(u_i u_i^T \right)}\right]_{kl, pq} =
    \lambda\sum_{S\subseteq[n]} {\frac{1}{\nchoosetwo} \left[\delta^{'}(S)\right]_{kl} \left[\delta^{'}(S)\right]_{pq}} =
    \frac{\lambda m}{\nchoosetwo} \mathop{\mathbb{E}^2}_{S\subseteq[n]} {\left[\delta^{'}(S)\right]_{kl}}
    = 0 .
\]
Now, $\sum^m_{i=1}{\lambda_{i} u_{i} u_{i}^T} = I_{\nchoosetwo}$ and $\sum^m_{i=1}{\lambda_{i} u_{i}} = 0$. By John's Theorem \ref{th:john_outer_contact}, the minimal volume ellipsoid containing $\M_n^{'}$ is $\unitball$. By applying the inverse of the above affine map, the first part of Theorem \ref{th:outer_main} is obtained.

Finally, denote $p = \columnvecsingleval{\nicefrac{1}{2}}$. Note that $\M_n$ has 2 families of facets.
One expresses the upper bound on the diameter of metric spaces, and each 
such facet is at distance $\frac{1}{2}$ from $p$. The other ones are the triangle inequalities, and each one is at distance $\frac{1}{2\sqrt3}$ from $p$.
Thus, the largest ball centered at $p$ and contained in $\M_n$ has radius $\frac{1}{2\sqrt3}$. The result now directly follows.
\end{proof}

\section{The Inner \lownerjohn Ellipsoid of $\M_n$}

In this section, we discuss the (unique) largest volume ellipsoid $\Ellipsoid_n$ contained in $\M_n$. Our main result is a description of this ellipsoid, given in Theorem \ref{th:inner_main}. The proof is in four parts:
\begin{enumerate}
    \item The fact that $\M_n$ is invariant under the action of the symmetric group
     $S_n$ allows us
     to infer a general form of the structure of $\Ellipsoid_n$. The results of this part (\ref{th:alpha_beta_form}, \ref{th:eigenvals}) apply to any convex body with such symmetry.
    \item An inner \lownerjohn Ellipsoid is the solution to a maximization problem. The symmetry of $\M_n$ and the resulting structure of $\Ellipsoid_n$ mentioned above
    help us solve this optimization problem, see Lemma \ref{th:maximization_problem}.
    \item An asymptotic solution of the problem in Lemma \ref{th:maximization_problem}, yields an explicit description of $\Ellipsoid_n$.
    \item Finally, this description lets us bound the minimal $r$ for which $\M_n \subseteq r\Ellipsoid_n$, proving Theorem \ref{th:inner_main}.
\end{enumerate}

We then continue to discuss some implications of this result. Namely, corollary \ref{th:min_distance} describes the minimal distance possible in a metric space $d \in \Ellipsoid_n$, and Theorem \ref{th:inner_contact_lemma} describes all contact points $\M_n \cap \Ellipsoid_n$.

\subsection{Proof of Theorem \ref{th:inner_main}} \label{subsec:inner_proof}
\begin{lemma} \label{th:alpha_beta_form}
Let $n \ge 3$ be an integer and let $\Ellipsoid_n$ be the largest volume ellipsoid contained in $\M_n$. Then 
\[\Ellipsoid_n = A_n \unitball + c_n\]
where $\unitball$ is the $\nchoosetwo$-dimensional unit ball, $A_n$ is an $\nchoosetwo \times \nchoosetwo$ positive semi-definite matrix
and $c_n \in \Rindices$ is as follows
\begin{align}
    & c_n = \columnvecsingleval{\delta_n} \label{eq:d_rep} \\
    & \text{for all $ij, kl \in \binom{[n]}{2}$: } {\left[A_{n}\right]}_{ij, kl} =
        \left\{
        \begin{matrix}
            \alpha_n & ij = kl \\
            \beta_n  & \left|\{i, j\} \cap \{k, l\}\right| = 1 \\
            \gamma_n & \{i, j\} \cap \{k, l\} = \emptyset
        \end{matrix}
        \right. \label{eq:c_rep}
\end{align}

with $\alpha_n , \beta_n , \gamma_n , \delta_n \in \R$. The specific values of these parameters are determined below.
\end{lemma}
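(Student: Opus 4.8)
The plan is to exploit the $S_n$-symmetry of $\M_n$. First I would observe that the map $d \mapsto \pi\!\cdot\! d$, defined on coordinates by $(\pi\!\cdot\! d)_{ij} = d_{\pi^{-1}(i)\pi^{-1}(j)}$, is a linear isometry of $\Rindices$ that maps $\M_n$ onto itself for every $\pi \in S_n$: it permutes both families of facets (the diameter constraints among themselves and the triangle inequalities among themselves). Since volume is preserved by this orthogonal action, $\pi$ carries the unique largest-volume inscribed ellipsoid $\Ellipsoid_n$ to an inscribed ellipsoid of the same volume; by uniqueness in Fact \ref{th:lowner_john_dilation}, $\pi\!\cdot\!\Ellipsoid_n = \Ellipsoid_n$. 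Writing $\Ellipsoid_n = A_n \unitball + c_n$ with $A_n$ positive semidefinite (the canonical representation from Section \ref{sec:preliminaries}), the invariance forces, for every permutation matrix $P_\pi$ acting on $\Rindices$, the identity $P_\pi A_n P_\pi^{-1} = A_n$ and $P_\pi c_n = c_n$. Here I would use that the PSD square root in the canonical representation is unique, so the equation $P_\pi A_n \unitball + P_\pi c_n = A_n\unitball + c_n$ really does split into these two matrix/vector equations rather than only an equation up to orthogonal factor.

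Next I would translate these invariance equations into the claimed form. The condition $P_\pi c_n = c_n$ for all $\pi$ says $c_n$ is constant on the orbits of $S_n$ acting on $\binom{[n]}{2}$; but that action is transitive, so $c_n$ is a constant vector, giving \eqref{eq:d_rep} with some $\delta_n \in \R$. For $A_n$, the condition $P_\pi A_n P_\pi^{-1} = A_n$ says the entry $[A_n]_{ij,kl}$ depends only on the $S_n$-orbit of the pair of edges $(\{i,j\},\{k,l\})$. Two such pairs are in the same orbit exactly when they have the same ``intersection pattern'': $\{i,j\}=\{k,l\}$, or $|\{i,j\}\cap\{k,l\}|=1$, or $\{i,j\}\cap\{k,l\}=\emptyset$ (one should note that the third case is nonempty only for $n \ge 4$; for $n=3$ it simply does not occur, and the statement still holds vacuously with $\gamma_n$ unconstrained or conventionally $0$). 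This yields exactly the three-parameter form \eqref{eq:c_rep} with scalars $\alpha_n, \beta_n, \gamma_n$.

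I do not expect any single step here to be a serious obstacle; the lemma is essentially a structural observation. The one point that needs a little care — and which I would state explicitly — is the passage from ``$\Ellipsoid_n$ is $S_n$-invariant'' to ``$A_n$ and $c_n$ are individually $S_n$-invariant.'' In general an ellipsoid $A\unitball + c$ can be written as $A'\unitball + c'$ with $A \neq A'$, but the representation is unique once we demand $A$ symmetric positive semidefinite, since then $A$ is recovered as the PSD square root of the quadratic form defining the ellipsoid and $c$ as its center; both the form and the center are genuinely invariant, so $A_n$ and $c_n$ are too. After that, everything reduces to identifying the orbits of $S_n$ on pairs of $2$-element subsets, which is elementary. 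The actual values of $\alpha_n,\beta_n,\gamma_n,\delta_n$ are deferred: they come out of the optimization in Lemma \ref{th:maximization_problem}, using the eigenvalue computation (Lemma \ref{th:eigenvals}) that diagonalizes every matrix of the form \eqref{eq:c_rep}.
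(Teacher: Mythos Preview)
Your proposal is correct and follows essentially the same route as the paper: deduce $S_n$-invariance of $\Ellipsoid_n$ from uniqueness of the John ellipsoid, then transfer that invariance to the center and the PSD shape matrix, and finally read off the three-orbit structure on pairs of edges. The only cosmetic difference is that you invoke uniqueness of the PSD square root to pass directly from $P_\pi \Ellipsoid_n = \Ellipsoid_n$ to $P_\pi A_n P_\pi^{-1} = A_n$, whereas the paper writes $\Ellipsoid_n = D\unitball + c_n$ and then averages $D$ over $S_n$; since each $\tau(D)$ is a PSD matrix giving the same ellipsoid, uniqueness forces $\tau(D)=D$ anyway, so the averaging is a no-op and the two arguments coincide.
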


\begin{remark}
For convenience of notation, when $n$ is obvious from the context we may omit the subscript, e.g., write $\alpha$ instead of $\alpha_n$.
\end{remark}

As stated above, the main idea of the proof builds on $\M_n$'s invariance under the action of $S_n$. The
permutation $\sigma \in S_n$ acts on $x \in \Rindices$ via
\[{\left[\sigma (x)\right]}_{ij} = x_{\sigma (i) \sigma (j)} .\] 
This action extends to subsets $B \subseteq \Rindices$ by $\sigma(B) = \{\sigma(x) \mid x \in B\}$, and to matrices $M$ by $[\sigma(M)]_{ij, kl} = M_{\sigma(i)\sigma(j), \sigma(k)\sigma(l)}$.
\begin{proof}[Proof of lemma \ref{th:alpha_beta_form}]
We wish to show that $\Ellipsoid_n$ is $S_n$-invariant.
Indeed, $\Ellipsoid_n \subseteq \M_n$ is the \textbf{unique} largest volume ellipsoid contained in $\M_n$. The conclusion
follows by acting with $\sigma\in S_n$ on the two sides of this inclusion, using the facts that
$\M_n$ is $\sigma$-invariant, and the action of $\sigma$ is volume-preserving.

To prove \eqref{eq:d_rep}, note
that $c_n$, the center of $\Ellipsoid_n$, is invariant under any $\sigma\in S_n$.

To prove \eqref{eq:c_rep}, let $D \in PSD_{\binom{[n]}{2}}$ be such that
\[
 \Ellipsoid_n = D \unitball + c_n
\]
and let
\[
A_n = \frac{1}{|S_n |} \sum_{\tau \in S_n} \tau(D)  \in PSD_{\binom{[n]}{2}}.
\]

The conclusion follows, since $\Ellipsoid_n = A_n \unitball + c_n$, and since $A_n$ is $S_n$-invariant. 
\end{proof}

\begin{theorem} \label{th:eigenvals}
Let $n \ge 3$ be an integer and let $A_n$ be defined as above. Then $A_n$ has (at most) three distinct eigenvalues:
\begin{alignat}{4}
    &\lambda_1 & =&\alpha & + 2(n-2)&\beta & +\binom{n-2}{2}&\gamma \quad \text{with multiplicity} \quad 1 \label{eq:eigens_expr_first} \\
    &\lambda_2 & =&\alpha &  + (n-4)&\beta &          -(n-3)&\gamma \quad \text{with multiplicity} \quad n-1 \\
    &\lambda_3 & =&\alpha &      - 2&\beta & +              &\gamma \quad \text{with multiplicity} \quad \binom{n}{2} - n \label{eq:eigens_expr_last}
\end{alignat}
\end{theorem}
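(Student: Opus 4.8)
The plan is to exploit the fact that $A_n$ is a linear combination of three natural $S_n$-invariant ``association scheme'' matrices and to diagonalize them simultaneously. Concretely, write $A_n = \alpha\, I + \beta\, B + \gamma\, C$, where $I$ is the identity on $\R^{\binom{[n]}{2}}$, $B$ is the adjacency matrix of the Johnson graph $J(n,2)$ (i.e.\ $B_{ij,kl}=1$ iff $|\{i,j\}\cap\{k,l\}|=1$), and $C$ is the adjacency matrix of its complement within the off-diagonal entries (i.e.\ $C_{ij,kl}=1$ iff $\{i,j\}\cap\{k,l\}=\emptyset$). These three matrices, together, span the Bose--Mesner algebra of the Johnson scheme $J(n,2)$, which is known to be commutative; hence they are simultaneously diagonalizable and $A_n$ has at most three distinct eigenvalues, with the same eigenspaces. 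The eigenspaces are the well-known $S_n$-isotypic components $V_0, V_1, V_2$ of dimensions $1$, $n-1$, and $\binom{n}{2}-n$ respectively, which matches the claimed multiplicities.

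It then remains to compute the three eigenvalues of $A_n$ on these eigenspaces. First I would record the row sums (valencies): each row of $B$ has $2(n-2)$ ones and each row of $C$ has $\binom{n-2}{2}$ ones, so the all-ones vector $\mathbf 1$ (spanning $V_0$) is an eigenvector with eigenvalue $\alpha + 2(n-2)\beta + \binom{n-2}{2}\gamma$; this gives $\lambda_1$. For $\lambda_2$ and $\lambda_3$ I would either quote the standard eigenvalues of the Johnson graph $J(n,2)$ — namely $B$ has eigenvalue $(n-4)$ on $V_1$ and $-2$ on $V_2$ — and likewise read off the eigenvalues of $C$ from $C = \binom{n-2}{2} \tfrac{1}{\binom n2}\!\cdot\!(\text{something})$... more cleanly, from the identity $I + B + C = J$ (the all-ones matrix) restricted to $V_1$ and $V_2$ where $J$ acts as $0$: this immediately yields that on $V_1$, $C$ has eigenvalue $-1-(n-4) = -(n-3)$, and on $V_2$, $C$ has eigenvalue $-1-(-2)=1$. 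Substituting into $A_n=\alpha I+\beta B+\gamma C$ gives exactly
\[
\lambda_2 = \alpha + (n-4)\beta - (n-3)\gamma, \qquad \lambda_3 = \alpha - 2\beta + \gamma,
\]
with multiplicities $n-1$ and $\binom n2 - n$.

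To make the argument self-contained I would include a short verification of the eigenvalues of the Johnson graph $J(n,2)$ on $V_1$ and $V_2$ rather than merely citing it: pick an explicit eigenvector in $V_1$, e.g.\ $v$ with $v_{ij} = [1\in\{i,j\}] + [2\in\{i,j\}] - \tfrac{2}{n}\cdot(\text{const})$ — or more simply the vector indexed by pairs that is a suitable difference of two ``star'' indicator vectors — apply $B$, and check the eigenvalue is $n-4$; and similarly exhibit one eigenvector in $V_2$. The identity $I+B+C=J$ and the fact that $\mathbf 1, V_1, V_2$ exhaust $\R^{\binom n2}$ (a dimension count: $1+(n-1)+\binom n2 - n = \binom n 2$) then force these to be all the eigenvalues with the stated multiplicities, since eigenvectors with distinct eigenvalues are independent and the multiplicities already sum to $\binom n2$.

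The main obstacle is not conceptual but bookkeeping: correctly computing the action of $B$ (and hence, via $C = J - I - B$, of $C$) on a concrete representative of $V_1$ and of $V_2$, and being careful that the three listed eigenvalues are genuinely distinct in general (so that ``at most three'' is the right statement — indeed $\lambda_1 \ne \lambda_2$ and $\lambda_2 \ne \lambda_3$ generically, but the lemma only claims ``at most''). One should also note that positive semidefiniteness of $A_n$, already guaranteed by Lemma \ref{th:alpha_beta_form}, is equivalent to $\lambda_1,\lambda_2,\lambda_3\ge 0$, a fact that will be used later when solving the optimization problem; it requires no extra work here.
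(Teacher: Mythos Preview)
Your proposal is correct and takes a genuinely different, more algebraic route than the paper. The paper proceeds entirely by hand: it writes down explicit eigenvectors for each eigenspace --- the all-ones vector for $\lambda_1$, differences $s(i)-s(j)$ of ``star'' indicators for $\lambda_2$, and four-term alternating vectors $\xi(i,j,k,l)=e_{ij}-e_{jk}+e_{kl}-e_{il}$ for $\lambda_3$ --- verifies each eigenvalue by a direct coordinate computation, and then exhibits enough linearly independent eigenvectors to fill out the dimension. Your approach instead recognises $A_n=\alpha I+\beta B+\gamma C$ as an element of the Bose--Mesner algebra of the Johnson scheme $J(n,2)$, invokes its commutativity to get simultaneous diagonalisation into the three isotypic components $V_0,V_1,V_2$, and then uses the clean identity $I+B+C=J$ (hence $C=-I-B$ on $V_1\oplus V_2$) to read off the eigenvalues of $C$ from those of $B$. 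This is slicker: once the Johnson-graph eigenvalues $2(n-2),\,n-4,\,-2$ are known, the whole theorem is a one-line substitution, and the multiplicities come for free from the standard dimensions of the $V_j$. The paper's approach buys complete self-containment with no appeal to association-scheme theory, and its explicit eigenvectors (especially the $\xi(i,j,k,l)$) are concrete enough to be reused; your approach buys conceptual clarity and scalability (the same argument works verbatim for any $S_n$-invariant matrix on $\binom{[n]}{2}$). If you do carry out your plan to verify the $J(n,2)$ eigenvalues on an explicit vector in $V_1$ and one in $V_2$, you will in fact end up writing down essentially the same eigenvectors the paper uses, so the two proofs converge at that point.
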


\begin{proof}

In this proof we explicitly determine all of $A_n$'s eigenvectors and their corresponding eigenvalues.

We show first that $\columnvecsingleval{1}$ is an eigenvector with eigenvalue $\lambda_1$. Indeed, for any $ij \in \binom{[n]}{2}$:
\[
\left[A_n \cdot \columnvecsingleval{1} \right]_{ij} = \sum_{kl}{[A_n]_{ij,kl}} = \alpha + 2(n-2)\beta + \binom{n-2}{2}\gamma ,
\]
as stated.

For any $1 \le i \le n$ define $s(i)$ by
\([s(i)]_{kl} = \left\{
\begin{smallmatrix}
    1 & i \in \{k, l\} \\
    0 & otherwise
\end{smallmatrix}
\right.\). Note that $s(1) - s(2)$ is an eigenvector of $A_n$ with eigenvalue $\lambda_2$:
\begin{align*}
    [A_{n} \cdot (s(1) - s(2))]_{ij} & = \sum_{kl}{[A_{n}]_{ij, kl} [s(1)-s(2)]_{kl}} = \sum_{k \ge 3}{[A_n]_{ij, 1k}} - \sum_{k \ge 3}{[A_n]_{ij, 2k}} = \\
    & = \left\{
    \begin{matrix}
          \alpha + (n-4)\beta - (n-3)\gamma\ & \{i, j\} \cap \{1, 2\} = \{1\} \\
        - \alpha - (n-4)\beta + (n-3)\gamma\ & \{i, j\} \cap \{1, 2\} = \{2\} \\
        0 & otherwise
    \end{matrix}
    \right. \\
    & = [\lambda_2 \cdot (s(1) - s(2))]_{ij} .
\end{align*}
Due to the $S_n$-symmetry of $A_n$, every vector of the form $s(i) - s(j)$ with $1 \le i\neq j \le n$
is an eigenvector of $A_n$, with eigenvalue $\lambda_2$. The multiplicity of $\lambda_2$ is at least $n-1$, because the $n-1$ vectors
$s(1) - s(2),\ldots, s(1)-s(n)$ are linearly independent.

For $1\le r \neq s\le n$ we denote by $e_{rs}$ the $\binom{n}{2}$-dimensional vector with a single $1$-entry at position $rs$ and $0$-entries elsewhere.
For any four \textbf{distinct} indices $1\le i, j, k, l \le n$, let $\xi(i, j, k, l) = e_{ij} - e_{jk} + e_{kl} - e_{il}$. 
Notice that the function $\xi$ is not symmetric, so while $i,j,k,l$ are only required to be a $4$-tuple with no reference to their ordering,
this symmetry is broken due to the definition of the function $\xi$. The following holds:

\begin{multline*}
    [A_n \cdot \xi(1, 2, 3, 4)]_{ij} = [A_n]_{ij, 12} - [A_n]_{ij, 23} + [A_n]_{ij, 34} - [A_n]_{ij, 14} = \\
    =
    \left\{
    \begin{matrix}
          \gamma - \gamma + \gamma - \gamma & = 0           & i, j > 4 \\
          \beta  - \beta  + \gamma - \gamma & = 0           & \min (i, j) \le 4 < \max (i, j) \\
          \beta  - \beta  + \beta  - \beta  & = 0           & ij \in \{13, 24\} \\
          \alpha - \beta  + \gamma - \beta  & = \lambda_3   & ij \in \{12, 34\} \\
          \beta  - \alpha + \beta  - \gamma & = - \lambda_3 & ij \in \{23, 14\} \\
    \end{matrix}
    \right.
\end{multline*}

In other words, $A_n \cdot \xi(1, 2, 3, 4) = \lambda_3 \cdot \xi(1, 2, 3, 4)$. Again, by symmetry $\xi(i, j, k, l)$
is an eigenvector of $A_n$ with eigenvalue $\lambda_3$ for every distinct $1\le i, j, k, l \le n$.
In order to show that the multiplicity of $\lambda_3$ is at least $\nchoosetwo - n$ we exhibit this
many linearly independent vectors in this eigenspace. Let
\[
    B_1=\{\xi(1, 2, i, j) \mid 3 \le i < j\}, ~~ B_2=\{\xi(1, 2, 3, j) - \xi(1, 2, j, 3) \mid j \ge 4\}.
\]
Since $|B_1| = \binom{n-2}{2}$ and $|B_2|=n-3$ and $\binom{n-2}{2}+n-3= \binom{n}{2} - n$, it suffices to show that the vectors in $B_1 \sqcup B_2$
are linearly independent. For every vector $w\in B_1 \sqcup B_2$ let $\ell(w)$ be the lexicographically last coordinate in $w$'s support.
Our claim follows from the easily verifiable fact that $\ell(w_1)\neq \ell(w_2)$ for every two distinct vectors $w_1, w_2\in B_1 \sqcup B_2$.

It follows that we have accounted for all of $A_n$'s eigenvalues, since we have exhibited $\nchoosetwo = 1 + (n-1) + \left(\nchoosetwo - n\right)$ linearly independent eigenvectors.
\end{proof}

With the above notation, we determine $\alpha_{n}, \beta_{n}, \gamma_{n}$ and $\delta_{n}$ by minimizing the relevant determinant (see \ref{sec:preliminaries} for 
the relevant background).
This leads to an optimization problem that we proceed to solve. 

\begin{lemma} \label{th:maximization_problem}
The values of $\alpha_{n}, \beta_{n}, \gamma_{n}$ and $\delta_{n}$ are the solution of the following optimization problem (with $\lambda_{1}, \lambda_{2}, \lambda_{3}$ as in Theorem \ref{th:eigenvals}):

\begin{align}
    \text{maximize} \quad & \log (\lambda_1 \cdot \lambda_{2}^{n-1} \cdot \lambda_{3}^{\binom{n}{2} - n}) \\
    \text{s.t.} \quad     & \lambda_1, \lambda_2, \lambda_3 > 0 \label{eq:final_con_psd} \\
                          & 3\alpha^2 - 4\alpha\beta + 4(n-2)\beta^2 -4(n-3)\beta\gamma + \left[\binom{n}{2}-3\right] \gamma^2 \le \delta^2 \label{eq:final_con_1} \\ 
                          & \alpha^2 + 2(n-2)\beta^2 + \binom{n-2}{2}\gamma^2 \le {(1-\delta)}^2 \label{eq:final_con_2}
\end{align}
\end{lemma}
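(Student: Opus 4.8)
The plan is to reformulate the volume-maximization problem as a constrained optimization over the four scalars $\alpha,\beta,\gamma,\delta$ and then translate the geometric containment $A_n\unitball + c_n \subseteq \M_n$ into the two inequality constraints \eqref{eq:final_con_1} and \eqref{eq:final_con_2}. First I would recall from the preliminaries that $\vol(\Ellipsoid_n) = (\det A_n)\cdot\vol(\unitball)$, and that by Lemma \ref{th:alpha_beta_form} the extremal ellipsoid has the $S_n$-invariant form with matrix $A_n$ parametrized by $(\alpha,\beta,\gamma)$. By Theorem \ref{th:eigenvals}, $\det A_n = \lambda_1\lambda_2^{\,n-1}\lambda_3^{\binom{n}{2}-n}$, so maximizing the volume is equivalent to maximizing $\log(\lambda_1\lambda_2^{\,n-1}\lambda_3^{\binom{n}{2}-n})$. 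The positive-definiteness requirement $A_n\succ 0$ (needed for $A_n$ to define a genuine full-dimensional ellipsoid) is exactly $\lambda_1,\lambda_2,\lambda_3>0$, giving constraint \eqref{eq:final_con_psd}. What remains is to show that the containment $\Ellipsoid_n\subseteq\M_n$ is captured precisely by \eqref{eq:final_con_1} and \eqref{eq:final_con_2}.

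The key step is the reduction of $\Ellipsoid_n\subseteq\M_n$ to facet-wise conditions. Since $\M_n$ is a polytope, $\Ellipsoid_n\subseteq\M_n$ iff $\Ellipsoid_n$ lies on the correct side of each facet-defining hyperplane, i.e. for each facet inequality $\langle a, x\rangle \le b$ of $\M_n$ one needs $\max_{x\in\Ellipsoid_n}\langle a,x\rangle = \langle a, c_n\rangle + \|A_n a\|_2 \le b$ (using that the support function of $A\unitball+c$ in direction $a$ is $\langle a,c\rangle + \|Aa\|$, since $A_n$ is symmetric PSD). The facets of $\M_n$ come in two $S_n$-orbits: the diameter constraints $x_{ij}\le 1$ (equivalently $x_{ij}\ge 0$, but by symmetry of the ellipsoid's center around $1/2$ these give the same condition) and the triangle inequalities $x_{ij}+x_{jk}-x_{ik}\ge 0$. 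By $S_n$-invariance of $\Ellipsoid_n$ it suffices to check one representative from each orbit. For the diameter facet with $a = e_{12}$ one computes $\langle a, c_n\rangle = \delta$ and $\|A_n e_{12}\|^2 = \alpha^2 + 2(n-2)\beta^2 + \binom{n-2}{2}\gamma^2$ (the squared norm of a row of $A_n$), and $x_{12}\le 1$ becomes $\delta + \|A_n e_{12}\| \le 1$, i.e. \eqref{eq:final_con_2}; the lower bound $x_{12}\ge 0$ becomes $\delta - \|A_n e_{12}\| \ge 0$, which is implied by \eqref{eq:final_con_2} together with $\delta\le 1/2$ (which will hold at the optimum). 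For the triangle facet with $a = e_{12} + e_{23} - e_{13}$ one computes $\langle a, c_n\rangle = \delta$ and then $\|A_n a\|^2$ by summing the three relevant rows of $A_n$; after collecting terms according to how the coordinate pair meets $\{1,2\},\{2,3\},\{1,3\}$ this yields $3\alpha^2 - 4\alpha\beta + 4(n-2)\beta^2 - 4(n-3)\beta\gamma + \big[\binom{n}{2}-3\big]\gamma^2$, and the triangle inequality $x_{12}+x_{23}-x_{13}\ge 0$ (i.e. the support function in the direction $-a$ being $\le 0$, equivalently $\delta - \|A_n a\|\le\delta$... more precisely $\min_{x\in\Ellipsoid_n}\langle a,x\rangle = \delta - \|A_n a\| \ge 0$) becomes \eqref{eq:final_con_1}.

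I expect the main obstacle to be the bookkeeping in computing $\|A_n a\|^2$ for the triangle direction $a = e_{12}+e_{23}-e_{13}$: one must partition the $\binom{n}{2}$ coordinates $pq$ into classes according to the pattern of intersections with each of $\{1,2\},\{2,3\},\{1,3\}$ (e.g. $pq\in\{12,23,13\}$ itself; $pq$ meeting exactly one of the three pairs in one vertex; $pq$ meeting two of them; $pq$ disjoint from $\{1,2,3\}$; and the pair $pq$ contained in $\{1,2,3\}$ but equal to none of the three edges, which cannot happen for distinct indices), count each class, and sum the squared entries. Care is needed because the coefficient $-1$ on $e_{13}$ introduces cross terms $-4\alpha\beta$ and $-4(n-3)\beta\gamma$ with the correct signs. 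A secondary point is to verify that these two facet families are the \emph{only} ones, i.e. that every facet of $\M_n$ is either a diameter constraint or a triangle inequality and that the nonnegativity constraints $x_{ij}\ge 0$ are redundant given the containment we have already imposed (or, if not redundant, that they are subsumed by \eqref{eq:final_con_2}); this is where I would invoke, or briefly reprove, the standard facet description of the metric polytope. Once these computations are in place, the equivalence of the volume problem with the stated optimization is immediate, and the explicit values of $\alpha,\beta,\gamma,\delta$ in Theorem \ref{th:inner_main} will follow from solving the KKT system of this four-variable problem asymptotically in $n$, which is deferred to the next part of the proof.
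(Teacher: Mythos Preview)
Your approach is essentially the paper's: restrict to the $S_n$-invariant form via Lemma~\ref{th:alpha_beta_form}, rewrite $\log\det A_n$ using Theorem~\ref{th:eigenvals}, and convert $\Ellipsoid_n\subseteq\M_n$ into one support-function inequality per $S_n$-orbit of facets, computing $\|A_n a\|_2^2$ explicitly for a representative triangle vector and a representative unit vector.

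One small correction: your handling of the nonnegativity constraints $x_{ij}\ge 0$ is off. The ellipsoid's center is at $\delta\approx 0.634$, not $1/2$, so neither ``symmetry of the center around $1/2$'' nor ``$\delta\le 1/2$ at the optimum'' is true. The actual reason these constraints can be dropped is that they are \emph{not facets} of $\M_n$ for $n\ge 3$: summing two triangle inequalities $x_{ik}+x_{kj}\ge x_{ij}$ and $x_{ij}+x_{jk}\ge x_{ik}$ already yields $x_{jk}\ge 0$. This is what the paper uses when it says $\M_n$ has exactly two families of facets.
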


\begin{proof}
Let $\M_n=\{x \mid \forall 1 \le i \le m: \: a_{i}^{T} x \le b_i \}$. Furthermore, let $\Ellipsoid = A \unitball + c$ be an ellipsoid,
where $A$ is a positive semidefinite matrix. The inclusion $\Ellipsoid\subseteq \M_n$ holds if and only if $a_i^T x \le b_i$ for every $x \in \Ellipsoid$ and every index $i$.
Equivalently, $a_{i}^{T} Au + a_{i}^{T} d \le b_i$ for every $n$-dimensional unit vector $u$. Since
$\displaystyle \max_{u \in \unitball}{a_{i}^T Au} = \|a_{i}^{T} A\|_2$, we have that $A \unitball + c \subseteq \M_n$ if and only if for all $1 \le i \le m$ it holds that $\|a_{i}^{T} A\|_2 \le b_i - a_{i}^{T} c$.

In addition, $\vol(\Ellipsoid)$ strictly increases with $\log \det A$, hence the unique maximal volume ellipsoid contained in $\M_n$ must be the single maximizer of the following problem:
\begin{align}
    \text{maximize} \quad & \log \det A \label{eq:obj_func} \\
    \text{s.t.} \quad     & A \text{ is positive semidefinite} \label{eq:constraint_psd} \\
                          & \forall i: \|a_{i}^{T} A\|_2 \le b_i - a_{i}^{T} c \label{eq:constraint_contained}
\end{align}

This is a maximization problem over all positive semidefinite matrices $A$ and vectors $c$. However, it is enough to maximize only over matrices and vectors of the form presented in Theorem \ref{th:alpha_beta_form}. This observation allows us to rewrite the whole problem much more concisely, in
terms of $\alpha, \beta, \gamma$ and $\delta$.

By Theorem \ref{th:eigenvals}, the objective function \eqref{eq:obj_func} may be written as $\log (\lambda_1 \cdot \lambda_{2}^{n-1} \cdot \lambda_{3}^{\binom{n}{2} - n})$, and constraint \eqref{eq:constraint_psd} may be written as $\lambda_1, \lambda_2, \lambda_3 \ge 0$. We now rewrite the set of constraints \eqref{eq:constraint_contained} via explicit computation. Indeed, there are just two types of separating hyperplanes $a_i$: Those
expressing the triangle inequalities, and the other - the upper bound of $1$ on distances. 

The inequality $x_{12} + x_{23} \ge x_{13}$ may be written as $t^T x \le 0$, where
t = $-e_{12} -e_{23} + e_{13}$. The corresponding constraint \eqref{eq:constraint_contained} is
\[\|t^T A\|_2^2 \le (0 - t^T c)^2 = \delta^2 .\]

Now, the value of $\|t^T A\|_2^2$ can be computed directly. For any $1 \le i < j \le n$ we have
\begin{equation} \label{eq:ttc}
[t^T A]_{ij} = \left\{
\begin{matrix}
    -\alpha         & ij \in \{12, 13\} \\
    \alpha - 2\beta & ij = 23 \\
    \gamma - 2\beta & i = 1, j \ge 4 \\
    -\gamma         & i \ge 2, j \ge 4
\end{matrix}
\right.    
\end{equation}

and so $\|t^T A\|_2^2 = 3\alpha^2 - 4\alpha\beta + 4(n-2)\beta^2 -4(n-3)\beta\gamma + \left[\binom{n}{2}-3\right] \gamma^2$. Combined, the constraint may be written as
\begin{equation} \label{eq:constraint_triangle}
    3\alpha^2 - 4\alpha\beta + 4(n-2)\beta^2 -4(n-3)\beta\gamma + \left[\binom{n}{2}-3\right] \gamma^2 \le \delta^2.
\end{equation}

Due to $A$'s symmetry, constraint \eqref{eq:constraint_triangle} captures all the triangle inequalities defining $\M_n$.

Similarly, the condition $x_{12} \le 1$, i.e. $e_{1,2}^T x \le 1$, translates to the constraint
\begin{equation} \label{eq:constraint_max_dist}
    \alpha^2 +2(n-2) \beta^2 + \binom{n-2}{2} \gamma^2 \le (1-\delta)^2 .
\end{equation}

This turns out to be the maximization problem presented in the lemma.
\end{proof}

We are now ready to prove Theorem \ref{th:inner_main}. The proof is mostly technical, and involves solving the optimization problem presented at lemma \ref{th:maximization_problem}.

\begin{proof}[Proof of Theorem \ref{th:inner_main} (part 1 of 2)]

We want to solve the optimization problem from lemma \ref{th:maximization_problem} using Lagrange multipliers. Let us introduce slack variables $t_1 , t_2$ for constraints \eqref{eq:final_con_1}, \eqref{eq:final_con_2} accordingly. Since $\lambda_1, \lambda_2, \lambda_3$ must be positive in an optimal solution, we may ignore conditions \eqref{eq:final_con_psd}. The problem translates to the following set of 8 Lagrange equations: 

\begin{align}
    &3\alpha^2  -4\alpha\beta  +4(n-2)\beta^2  -4(n-3)\beta\gamma  +\left[\nchoosetwo - 3\right]\gamma^2  -       \delta^2  -t_1  = 0 \\
    & \alpha^2                 +2(n-2)\beta^2                               +\binom{n-2}{2}\gamma^2  - (1 - \delta)^2  -t_2  = 0 \\
    & \frac{1}{\lambda_1} +\frac{n-1}{\lambda_2} + \frac{\binom{n}{2} - n}{\lambda_3} = \mu_1\cdot (6\alpha        -4\beta) +\mu_2 \cdot (2\alpha) \label{eq:pre_linear_1} \\
    \begin{split} \label{eq:pre_linear_2} 
        & \frac{2(n-2)}{\lambda_1}          +\frac{(n-1)(n-4)}{\lambda_2}   +\frac{-2\left[\binom{n}{2}-n\right]}{\lambda_3}  = \\
        & \quad\quad\quad\quad\quad \mu_1\cdot (-4\alpha   +8(n-2)\beta -4(n-3)\gamma)  +\mu_2 \cdot (4(n-2)\beta)
    \end{split} \\
    \begin{split} \label{eq:pre_linear_3}
        & \frac{\binom{n-2}{2}}{\lambda_1}  +\frac{-(n-1)(n-3)}{\lambda_2}  +\frac{\binom{n}{2}-n}{\lambda_3}                 = \\
        & \quad\quad\quad\quad\quad \mu_1\cdot (-4(n-3)\beta  +2\left[\binom{n}{2}-3\right]\gamma)  +\mu_2 \cdot \left(2\binom{n-2}{2}\gamma\right)
    \end{split} \\
    &\mu_1 \delta + \mu_2 \delta -\mu_2 \label{eq:t_1} = 0 \\
    &\mu_1 t_1 \label{eq:t_2} = 0 \\
    &\mu_2 t_2 \label{eq:t_3} = 0
\end{align}

Observe that there is no solution with $\mu_1 = \mu_2 = 0$. Moreover, the geometry of this problem dictates that an optimal solution has $\delta \neq 0, 1$. Combined with these facts, equations \eqref{eq:t_1}, \eqref{eq:t_2} and \eqref{eq:t_3} yield $t_1 = t_2 = 0$ and $\delta = \frac{\mu_2}{\mu_1 + \mu_2}$.

Now, it is possible to substitute equations \eqref{eq:pre_linear_1}, \eqref{eq:pre_linear_2} and \eqref{eq:pre_linear_3} with the following set of their linear combinations, and get more convenient equations:
\begin{alignat*}{3}
    2\binom{n-2}{2}&\text{eq}_\eqref{eq:pre_linear_1} & -(n-3)&\text{eq}_\eqref{eq:pre_linear_2} & +2&\text{eq}_\eqref{eq:pre_linear_3} \\
             2(n-1)&\text{eq}_\eqref{eq:pre_linear_1} & +(n-4)&\text{eq}_\eqref{eq:pre_linear_2} & -4&\text{eq}_\eqref{eq:pre_linear_3} \\
                   &\text{eq}_\eqref{eq:pre_linear_1} &      +&\text{eq}_\eqref{eq:pre_linear_2} &  +&\text{eq}_\eqref{eq:pre_linear_3}
\end{alignat*}
After this substitution, we are left with the following set of equations:

\begin{alignat}{2}
    \frac{\mu_2^2}{(\mu_1 + \mu_2)^2} &=        3\alpha^2        -4\alpha\beta   +4(n-2)\beta^2   -4(n-3)\beta\gamma  +\left[\nchoosetwo - 3\right]\gamma^2 \\
    \frac{\mu_1^2}{(\mu_1 + \mu_2)^2} &=         \alpha^2                        +2(n-2)\beta^2                                +\binom{n-2}{2}\gamma^2   \\
                        3\binom{n}{3} &= 2((3n-4) \mu_1      +(n-2)  \mu_2)      \cdot (  \alpha  -2            \beta                         +\gamma)^2 \\
                        3\binom{n}{3} &= (4(n-1)  \mu_1     +2(n-2)  \mu_2)      \cdot (  \alpha  +(n-4)        \beta                    -(n-3)\gamma)^2 \\
                               \nchoosetwo &= 2(       \mu_1           +  \mu_2)      \cdot (  \alpha  +2(n-2)       \beta           +\binom{n-2}{2}\gamma)^2
\end{alignat}

Due to geometric considerations, parameters $\lambda_1$, $\lambda_2$, $\lambda_3$ and $\delta$ are all $\Theta(1)$. Combined with Theorem \ref{th:eigenvals}, we have:
\begin{align*}
    \alpha &= \lambda_3 + o(1) = \Theta(1) \\
    \beta  &= O\left(\frac{1}{n} \right) \\
    \gamma &= O\left(\frac{1}{n^2} \right).
\end{align*}

Substituting this in our equations, we get the following asymptotic equalities:

\begin{alignat}{2}\label{eq:almost_there}
    \frac{\mu_2^2}{(\mu_1 + \mu_2)^2} &= 3\lambda_3^2 + O\left(\frac{1}{n} \right) \\
    \frac{\mu_1^2}{(\mu_1 + \mu_2)^2} &=  \lambda_3^2 + O\left(\frac{1}{n} \right) \\
                    \frac{1}{4} n^{2} &= (3\mu_1 + \mu_2) \cdot \lambda_3^2 + O(n) \\
                    \frac{1}{4} n^{2} &= (2\mu_1 + \mu_2) \cdot \lambda_2^2 + O(n) \\
                    \frac{1}{4} n^{2} &= ( \mu_1 + \mu_2) \cdot \lambda_1^2 + O(n).
\end{alignat}

The following solution then follows by simple calculation:
\begin{alignat*}{2}
    \lambda_1   &= \frac{\sqrt{3} - 1}{2} \cdot \sqrt[4]{3} + o(1)                                                            && \approx 0.4817 + o(1) \\
    \lambda_2   &= \frac{\sqrt{3} - 1}{2} \cdot \sqrt{3 - \sqrt{3}} + o(1)                                                    && \approx 0.4122 + o(1) \\
    \lambda_3   &= \frac{\sqrt{3} - 1}{2} + o(1)                                                                              && \approx 0.3660 + o(1) \\
    \alpha      &= \frac{\sqrt{3} - 1}{2} + o(1)                                                                              && \approx 0.3660 + o(1) \\
    \beta       &= \frac{\sqrt{3} - 1}{2} \cdot (\sqrt{3 - \sqrt{3}} - 1) \cdot \frac{1}{n} + o\left( \frac{1}{n}\right)      && \approx 0.0461 \cdot \frac{1}{n} + o\left( \frac{1}{n}\right) \\
    \gamma      &= \frac{\sqrt{3} - 1}{2} \cdot (4 - 2\sqrt{3 - \sqrt{3}}) \cdot \frac{1}{n^2} + o\left( \frac{1}{n^2}\right) && \approx 0.6398 \cdot \frac{1}{n^2} + o\left( \frac{1}{n^2}\right)\\
    \delta      &= \frac{\sqrt{3} - 1}{2} \cdot \sqrt{3} + o(1)                                                               && \approx 0.6340 + o(1).
\end{alignat*}
\end{proof}

We can now complete the proof of the theorem and bound the smallest 
inflation factor $r > 0$ such that $\Ellipsoid_n$ dilated by $r$ about its center 
contains all of $\M_n$. As stated in the Theorem, we show that it is between $0.99n + o(n)$ and $1.22n + o(n)$.
Note again that this bound is linear in $n$, and is better than the quadratic bound guaranteed by Theorem \ref{th:lowner_john_dilation}.

\begin{proof}[Proof of Theorem \ref{th:inner_main} (part 2 of 2)]
To obtain an upper bound, we seek a small $r>0$, such that every $x \in \M_{n}$ is also in $r \Ellipsoid_n$. 
Clearly, an even stronger condition
is the same for all $x\in {[0, 1]}^{\nchoosetwo}$, or, equivalently
the same for all $0,1$ vectors. So, let
$x \in \{0, 1\}^{\nchoosetwo}$ be a $0,1$ vector, and let
$r = {\nchoosetwo}^{\nicefrac{1}{2}} \cdot \frac{\delta_{n}}{\lambda_3} = \sqrt{3\binom{n}{2}} + o(n) \approx 1.22n + o(n)$. Then
\[{\|x-c_{n}\|}_{2}^{2} = \left(\nchoosetwo - \|x\|_{1}\right) \cdot \delta_{n}^2 + \|x\|_{1} \cdot {(1 - \delta_{n})}^2 \le \nchoosetwo \delta_{n}^{2}\]
It follows that if $n$ is large enough, then $\M_{n}\subseteq r \Ellipsoid_n$
because for every $x \in \M_{n}$ there holds
\begin{align*}
  {\|{\left(r A_{n}\right)}^{-1}(x-c_{n})\|_{2}} \le \frac{1}{r} \cdot \frac{1}{\lambda_3}\max_{x \in {\{0, 1\}}^{\nchoosetwo}}{\|x-c_{n}\|_{2}} = 1.
\end{align*}

To obtain a lower bound, consider a vertex $v$ corresponding to a cut of size
$\left\lfloor \frac{n}{2} \right\rfloor$. A direct computation shows, 
that $v$ is not in $r' \Ellipsoid_n$ for
$r' < \frac{3^{\frac{3}{4}} \sqrt{5 + \sqrt{3}}}{6}n + o(n) \approx 0.99n + o(n)$.
\end{proof}

\begin{remark}
    We do not know whether or not the inflated ellipsoid
    with this value of $r'$ contains $\M_n$, but it does contain
    the {\em Cut Polytope}.
\end{remark}

\subsection{Conclusions from Theorem \ref{th:inner_main}} \label{subsec:inner_conclusions}

The rest of this section is devoted to some conclusions from this main result.

Kozma, Meyerovitch, Peled and Samotij proved in their beautiful paper \cite{kozma2021does}
that $\M_n$ is well-approximated by the hypercube $[\frac{1}{2} - n^{-s}, 1]^{\nchoosetwo}$, 
where $s>0$ is an absolute constant (see their Theorem 1.3).
Namely, for all $n\ge 3$, a uniformly sampled metric space $d\in\M_n$, satisfies:
\[\text{with probability~}>1-O(n^{-s}), \text{all distances~} d_{ij} \text{~are~} \ge \frac{1}{2} -n^{-s}\]

This suggests the question concerning the range of the coordinates in
$\Ellipsoid_n$'s points.

\begin{corollary} \label{th:min_distance}
Let $n \geq 3$ be an integer and let $\Ellipsoid_n$ be the largest volume ellipsoid contained in $\M_n$. Then
\[
    \min_{d\in \Ellipsoid_{n}, \: ij\in \binom{[n]}{2}}{d_{ij}}
    = 2-\sqrt{3} + o(1) \approx 0.27 + o(1)
\]
\end{corollary}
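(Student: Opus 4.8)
The plan is to reduce the question to a one-dimensional extremal computation using the explicit form of $\Ellipsoid_n$ supplied by Theorem \ref{th:inner_main}. Fix a coordinate, say $ij = 12$, and write the linear functional $\phi(d) = d_{12} = e_{12}^T d$. Since $\Ellipsoid_n = A_n \unitball + c_n$, the range of $\phi$ over $\Ellipsoid_n$ is the interval $[\,e_{12}^T c_n - \|A_n e_{12}\|_2,\ e_{12}^T c_n + \|A_n e_{12}\|_2\,]$, because $\max_{u\in\unitball} e_{12}^T A_n u = \|A_n e_{12}\|_2$ (this is exactly the computation already used in the proof of Lemma \ref{th:maximization_problem}). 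Thus the minimum coordinate value over $\Ellipsoid_n$ equals $\delta_n - \sqrt{\alpha_n^2 + 2(n-2)\beta_n^2 + \binom{n-2}{2}\gamma_n^2}$. By $S_n$-symmetry of $A_n$ and $c_n$ the same value is obtained for every coordinate, so the double minimum in the statement is attained at this single expression.

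Next I would substitute the asymptotic values of $\alpha_n,\beta_n,\gamma_n,\delta_n$ from Theorem \ref{th:inner_main}. The quantity under the square root is precisely the left-hand side of constraint \eqref{eq:final_con_2}, which in the optimal solution is tight and equals $(1-\delta_n)^2$; hence $\sqrt{\alpha_n^2 + 2(n-2)\beta_n^2 + \binom{n-2}{2}\gamma_n^2} = 1 - \delta_n$. Therefore the minimum coordinate equals $\delta_n - (1-\delta_n) = 2\delta_n - 1$. Plugging in $\delta_n = \tfrac{\sqrt3 - 1}{2}\cdot\sqrt 3 + o(1) = \tfrac{3 - \sqrt3}{2} + o(1)$ gives $2\delta_n - 1 = (3 - \sqrt 3) - 1 + o(1) = 2 - \sqrt 3 + o(1) \approx 0.27 + o(1)$, as claimed.

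The only genuinely delicate point is justifying that constraint \eqref{eq:final_con_2} is active at the optimum, i.e. that $t_2 = 0$; but this is already established inside the proof of Theorem \ref{th:inner_main} (part 1), where the Lagrange analysis forces $t_1 = t_2 = 0$. Everything else is bookkeeping: confirming that the minimizing unit vector $u$ for $e_{12}^T A_n u$ is feasible (it is, being a unit vector), and that the $o(1)$ error terms in $\alpha_n,\delta_n$ propagate additively through the finitely many arithmetic operations, which they do since the expression $2\delta_n - 1$ is a continuous function of $\delta_n$ alone once \eqref{eq:final_con_2} is used. I would present the argument in roughly the order above: first the reduction of the coordinate range to $\delta_n \pm \|A_n e_{12}\|_2$, then the identification of $\|A_n e_{12}\|_2$ with $1-\delta_n$ via the tight constraint, and finally the numeric substitution.
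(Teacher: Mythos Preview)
Your proposal is correct and follows essentially the same route as the paper: both reduce by $S_n$-symmetry to $\min_{d\in\Ellipsoid_n} e_{12}^T d = \delta_n - \|e_{12}^T A_n\|_2 = \delta_n - \sqrt{\alpha_n^2 + 2(n-2)\beta_n^2 + \binom{n-2}{2}\gamma_n^2}$ and then evaluate asymptotically. Your extra observation---that the radicand equals $(1-\delta_n)^2$ exactly because constraint \eqref{eq:final_con_2} is active (i.e.\ $t_2=0$ in the Lagrange analysis), so the minimum is simply $2\delta_n - 1$---is a clean shortcut the paper does not spell out; it replaces a direct asymptotic computation in $\alpha_n,\beta_n,\gamma_n$ by a single substitution of $\delta_n$.
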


\begin{proof}
Due to the $S_n$-symmetry of $\Ellipsoid_n$, there holds: 
\[\displaystyle\min_{d\in \Ellipsoid_{n}, \: ij\in \binom{[n]}{2}}{d_{ij}} = \min_{d\in \Ellipsoid_{n}}{d_{12}} = \min_{d\in \Ellipsoid_{n}}{e_{12}^{T} d}.\]
Recall that $\Ellipsoid_{n}=A_n \unitball + c_n$, whence
\begin{align*}
    \min_{d\in \Ellipsoid_n}{e_{12}^{T} d} & = \min_{\|v\|_{2} \leq 1}{e_{12}^{T}c_{n} + e_{12}^{T} A_{n}v} = \delta_{n} + \min_{\|v\|_{2} \leq 1}{\left(e_{12}^{T} A_{n}\right) v} = \\
    & = \delta_n - \|e_{12}^{T} A_{n}\|_{2} = \delta_n - \sqrt{\alpha_{n}^2 + 2(n-2)\beta_{n}^2 + \binom{n-2}{2}\gamma_{n}^2} = \\
    & = 2 - \sqrt{3} + o(1) \approx 0.27 + o(1)
\end{align*}
\end{proof}

Finally, we determine the intersection of $\Ellipsoid_n$ and $\M_n$. 
Notice that inner \lownerjohn ellipsoids have a property similar to the
one depicted in Theorem \ref{th:john_outer_contact} \cite{ball1992ellipsoids}.

\begin{theorem} \label{th:inner_contact_lemma}
For every $n \ge 3$, the intersection between $\M_n$ and its inner \lownerjohn ellipsoid 
$\Ellipsoid_n$ is comprised of the $S_n$-orbits of the following points $p, q \in \Rindices$:

\begin{equation} \label{eq:contact_triangle}
    [p]_{ij} = \left\{
    \begin{matrix}
        1 - \frac{\sqrt3}{3} + o(1)                 & \approx 0.42 + o(1) & ij \in \{12, 13\} \\
        2\left( 1 - \frac{\sqrt3}{3} \right) + o(1) & \approx 0.85 + o(1) & ij = 23 \\
        \frac{3 - \sqrt{3}}{2} + o(1)               & \approx 0.63 + o(1) & \text{otherwise}
    \end{matrix}
    \right.
\end{equation}
\begin{equation} \label{eq:tangency_diameter_bound}
    [q]_{ij} = \left\{
    \begin{matrix}
        1                                                                & ij = 12 \\
        \frac{3 - \sqrt{3}}{2} + o(1) \approx 0.63 + o(1) & \text{otherwise}
    \end{matrix} 
    \right.
\end{equation}

\end{theorem}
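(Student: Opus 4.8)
The plan is to read off the contact set $\Ellipsoid_n\cap\partial\M_n$ directly from the optimality conditions already established in the proof of Theorem \ref{th:inner_main}, together with one elementary fact about tangency to ellipsoids. First I would record the general principle: if $\Ellipsoid=A\unitball+c$ with $A$ symmetric positive definite and $H=\{x:a^Tx=b\}$ is a supporting hyperplane of $\Ellipsoid$ — equivalently $b-a^Tc=\|Aa\|_2$ — then $\Ellipsoid$ meets $H$ in the single point $x^*=c+\tfrac{A^2a}{\|Aa\|_2}$, namely the maximizer of $a^Tx$ over $\Ellipsoid$ (this is the computation $\max_{\|v\|_2\le1}a^T(Av+c)=a^Tc+\|Aa\|_2$, attained at $v^*=Aa/\|Aa\|_2$). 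Conversely, every point of $\partial\M_n$ lies on some facet hyperplane $\{a_i^Tx=b_i\}$; if such a point also lies in $\Ellipsoid_n$, then since $\Ellipsoid_n\subseteq\M_n$ the value $a_i^Tx$ is at most $\max_{\Ellipsoid_n}a_i^T(\cdot)\le b_i$ and is attained, so that hyperplane supports $\Ellipsoid_n$ and the point is its unique tangency point. Hence $\Ellipsoid_n\cap\partial\M_n$ is precisely the set of tangency points of those facet hyperplanes of $\M_n$ that support $\Ellipsoid_n$, with facets whose defining inequality is slack contributing nothing.

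Next I would use the $S_n$-structure. The polytope $\M_n$ has exactly two $S_n$-orbits of facets: the triangle inequalities and the bounds $x_{ij}\le1$, corresponding in Lemma \ref{th:maximization_problem} to constraints \eqref{eq:final_con_1} and \eqref{eq:final_con_2}. Part 1 of the proof of Theorem \ref{th:inner_main} shows that at the optimum both slacks vanish, $t_1=t_2=0$, i.e. both constraints are tight; since $\Ellipsoid_n$ is $S_n$-invariant, every facet in each orbit then supports $\Ellipsoid_n$. Consequently $\Ellipsoid_n\cap\partial\M_n$ is the union of the $S_n$-orbit of the tangency point of a single triangle facet and the $S_n$-orbit of the tangency point of a single diameter facet, which is exactly the form asserted in the theorem; it remains only to compute these two representatives.

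For the diameter facet I would take $a=e_{12}$, $b=1$: here $A_na$ is the $12$-th column of $A_n$ (whose entries are given by \eqref{eq:c_rep}), $\|A_ne_{12}\|_2=1-\delta_n$ by tightness of \eqref{eq:final_con_2}, so $q=c_n+A_n^2e_{12}/(1-\delta_n)$ with $[A_n^2e_{12}]_{ij}=\langle A_ne_{ij},A_ne_{12}\rangle$ depending only on $|\{i,j\}\cap\{1,2\}|$; in particular $[q]_{12}=\delta_n+(1-\delta_n)^2/(1-\delta_n)=1$ as required. For the triangle facet I would take $a=t=-e_{12}-e_{23}+e_{13}$, $b=0$: then $b-a^Tc_n=\delta_n=\|A_nt\|_2$ by tightness of \eqref{eq:final_con_1}, the vector $A_nt$ is already given explicitly in \eqref{eq:ttc}, and $p=c_n+A_n^2t/\delta_n$ where $A_n^2t=A_n(A_nt)$ is one more structured product. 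Substituting the asymptotics $\alpha_n=\tfrac{\sqrt3-1}{2}+o(1)$, $\beta_n=O(1/n)$, $\gamma_n=O(1/n^2)$, $\delta_n=\tfrac{3-\sqrt3}{2}+o(1)$ from part 1 of the proof of Theorem \ref{th:inner_main} and simplifying (e.g. one checks $[p]_{12}=\delta_n-\alpha_n^2/\delta_n+o(1)=\tfrac{\delta_n^2-\alpha_n^2}{\delta_n}+o(1)=1-\tfrac{\sqrt3}{3}+o(1)$, and similarly $[p]_{23}=\tfrac{\delta_n^2+\alpha_n^2}{\delta_n}+o(1)=2(1-\tfrac{\sqrt3}{3})+o(1)$, while all remaining coordinates of $p$ and $q$ differ from $\delta_n$ by $o(1)$) yields \eqref{eq:contact_triangle} and \eqref{eq:tangency_diameter_bound}. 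I expect the only genuinely laborious step to be the bookkeeping in the products $A_n(A_ne_{12})$ and $A_n(A_nt)$ — grouping the $\binom{n}{2}$ coordinate pairs by their intersection pattern with $\{1,2\}$, respectively $\{1,2,3\}$, and tracking which contributions survive to leading order, given that $A_n$ has one $\Theta(1)$-sized entry per row, $\Theta(n)$ entries of size $\Theta(1/n)$, and $\Theta(n^2)$ entries of size $\Theta(1/n^2)$ — whereas the structural and symmetry reductions above are the conceptual content.
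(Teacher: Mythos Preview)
Your proposal is correct and follows essentially the same route as the paper: both identify each contact point as the tangency point $c_n+A_n^2a/\|A_na\|_2$ of $\Ellipsoid_n$ with a facet hyperplane $\{a^Tx=b\}$, compute $A_n^2t$ and $A_n^2e_{12}$ using the structure of $A_n$ and the vector $A_nt$ from \eqref{eq:ttc}, and then substitute the asymptotics for $\alpha_n,\beta_n,\gamma_n,\delta_n$. Your account is in fact slightly more complete than the paper's, since you explicitly invoke $t_1=t_2=0$ from the Lagrange analysis to justify that \emph{every} facet supports $\Ellipsoid_n$ and argue that these tangency points exhaust the contact set; the paper leaves both points implicit.
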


\begin{proof}
Recall that $\M_n$ has 2 types of facets: Those that
express the triangle inequalities, and those bounding all distances by $1$.

Let $p \in \Rindices$ be the contact point of $\Ellipsoid_n$ with
the facet corresponding to the triangle inequality $d_{23} - d_{12} - d_{23} \le 0$,
and let us denote $- e_{12} - e_{23} + e_{13}\in\Rindices$ by $t$. Since
$\Ellipsoid_{n}=A_n \unitball + c_n$, there holds:

\begin{multline*}
    p = c_n + A_n \left( \argmax_{\|v\| \le 1} {t^{T} (A_n v + c_n)} \right)
    = c_n + A_n \left(\argmax_{\|v\| \le 1} {t^{T} A_n v} \right) \\
    = c_n + \frac{1}{\|t^T A_n\|}A_n \left(A_n^T t \right)
    = c_n + \frac{1}{\|t^T A_n\|} A_n^2 t .
\end{multline*}

As shown in \eqref{eq:ttc}, for all $1 \le i, j \le n$:
\begin{equation*}
    [A t]_{ij} = \left\{
    \begin{matrix}
        \alpha          & ij \in \{12, 13\} \\
        \alpha - 2\beta & ij = 23 \\
        \gamma - 2\beta & i = 1, j \ge 4 \\
        \gamma          & i \ge 2, j \ge 4
    \end{matrix}
    \right.    
\end{equation*}
and so
\begin{equation*}
    [A \cdot At]_{ij} = \sum_{kl} A_{ij, kl}[At]_{kl} = \left(A_{ij, 12} + A_{ij, 13}\right) \alpha + A_{ij, 23} (2\beta - \alpha) + \sum_{l \ge 4} A_{ij, 1l}(2\beta - \gamma) + \sum_{\substack{k \ge 2 \\ l \ge 4}} A_{ij, kl} \gamma .
\end{equation*}

Equation \eqref{eq:contact_triangle} follows by 
direct computation (which we omit for the sake of brevity), 
combined with the bounds from
Theorem \ref{th:inner_main}, we get result. An application of $\sigma \in S_n$ to $p$
yields the intersection point of $\Ellipsoid_n$
with the hyperplane $t_{\sigma(1), \sigma(2), \sigma(3)}$.

If $q \in \Rindices$ is the point at which $\Ellipsoid_n$ intersects the facet 
$d_{12} \le 1$, then
\[
    q = c_n + A_n \left(\argmax_{\|v\| \le 1} e_{01}^T (A_n v + c_n) \right) = c_n + A_n \left(\argmax_{\|v\| \le 1} [A_n]_{12} v \right) = c_n + \frac{1}{\|[A_n ]_{12} \|_2} A_n [A_n]_{12}
\]
and equation \eqref{eq:tangency_diameter_bound} again follows
by a direct computation whose details we omit.
\end{proof}

\section{Discussion and Open Problems}

\begin{itemize}
\item 
Let us mention one not-so-obvious reason to study the approximation of convex
bodies by ellipsoids: Optimizing a linear function on an ellipsoid is a rather
trivial matter, since an ellipsoid is the affine image of a ball. It is conceivable
that this could offer a method for finding approximate solutions to hard
optimization problems. If you are unable to determine the maximum of some 
(linear or convex) function $f$ on a convex body $K$, you may first approximate $K$
by an ellipsoid $\Ellipsoid$ and optimize $f$ on it. We are presently unable
to offer any concrete examples of interest, but this leads us to our next point.
\item
Can one find the \lownerjohn ellipsoid of the Cut Polytope? How good is
the approximation that this provides to the MAXCUT problem?
Note that the symmetry arguments that we used in the study of $\M_n$'s
approximating ellipsoids apply here as well.
\item
We managed to get good upper and lower bounds on the optimal inflating factor
in Theorem \ref{th:inner_main}, but the exact number still eludes us. It remains
open to determine the exact factor.
\end{itemize}


\printbibliography
\end{document}